\newcommand{\C}{\mathbb C}
\newcommand{\R}{\mathbb R}
\newcommand{\N}{\mathbb N}
\newcommand{\F}{\mathcal F}
\newcommand{\HH}{\mathcal H}
\newcommand{\TP}{\mathbb {TP}}
\DeclareMathOperator{\type}{type}
\newtheorem{theorem}{Theorem}[section]
\newtheorem{proposition}[theorem]{Proposition}
\newtheorem{corollary}[theorem]{Corollary}
\newtheorem{question}[theorem]{Question}
\newtheorem{conjecture}[theorem]{Conjecture}
\newtheorem{lemma}[theorem]{Lemma}
\newtheorem{algorithm}[theorem]{Algorithm}
\theoremstyle{definition}
\newtheorem{definition}[theorem]{Definition}
\newtheorem{example}[theorem]{Example}
\theoremstyle{remark}
\newtheorem{remark}[theorem]{Remark}
\title[The $4\times 4$ minors of a $5\times n$ matrix are a tropical basis]{The $4\times 4$ minors of a $5\times n$ matrix\\ are a tropical basis}
\author{Melody Chan}
\address{Department of Mathematics, University of California, Berkeley, 970 Evans Hall, CA 94720-3840, USA}
\author{Anders Jensen}
\address{Mathematisches Institut, Georg-August-Universit\"at G\"ottingen, Bunsen\-stra\ss e 3-5, D-37073 G\"ottingen, Germany}
\author{Elena Rubei}
\address{Dipartimento di Matematica "U.Dini" viale Morgagni 67/A, 50134 Firenze, Italy}
\email{rubei@math.unifi.it}
\thanks{The first author was supported by the Department of Defense (DoD) through the  National Defense Science \& Engineering Graduate Fellowship (NDSEG) Program.}
\thanks{The second author was supported by the German Research Foundation (Deutsche Forschungsgemeinschaft (DFG)) through the Institutional Strategy of the University of G\"ottingen.}
\date{12/28/2009}
\begin{document}

\begin{abstract}
We compute the space of $5\times 5$~matrices of tropical rank at most
$3$ and show that it coincides with the space of $5\times 5$ matrices
of Kapranov rank at most $3$, that is, the space of five labeled
coplanar points in $\TP^4$. We then prove that the Kapranov rank of
every $5\times n$~matrix equals its tropical rank; equivalently, that the
$4\times 4$~minors of a $5\times n$~matrix of variables form a
tropical basis. This answers a question asked by Develin, Santos, and
Sturmfels.
\end{abstract}

\maketitle

\section{Introduction}
The tropical semi-ring $(\R,\oplus,\odot)$,  consisting of the real numbers equip\-ped with tropical addition and multiplication
$$x\oplus y:= \textup{min}(x,y)~~~~~~~~~~\textup{      and      }~~~~~~~~~~x\odot y:=x+y~~~~~~~~~~\textup{   for all   }~~~~~~~~~~x,y\in\R,$$
gives rise to three distinct notions of rank of a tropical matrix
$A\in\R^{d\times n}$. These, tropical rank, Kapranov rank, and Barvinok rank, were studied in \cite{DevelinSantosSturmfels2003}. They arise as the tropicalizations of three equivalent characterizations of matrix rank in the usual sense.

Indeed, classically, a $d\times n$ matrix with entries in a field $K$ has rank
at most $r$ if and only if all of its $(r+1)\times (r+1)$
submatrices
are singular. Equivalently, the set of $d\times n$ matrices of rank at
most $r$ is the determinantal variety defined by the ideal
$J_r^{dn}\subseteq K[x_{11},\dots,x_{dn}]$ generated by the
$(r+1)\times (r+1)$ minors of a $d\times n$-matrix of variables. Finally, this
algebraic variety is the image of the matrix product map
$\phi:K^{d\times r}\times K^{r\times n} \rightarrow K^{d\times n}$.

Accordingly,
the set of matrices of tropical rank
$\leq r$ is defined to be the intersection of the
tropical hypersurfaces defined by the $(r+1)\times (r+1)$ minors in
$K[x_{11},\dots,x_{dn}]$. The set of matrices of Kapranov rank $\leq
r$ is defined to be the tropical variety $T(J_r^{dn})$, while the set of
matrices of Barvinok rank $\leq r$ is the image of the tropicalization
of $\phi$. We will revisit these definitions in Section~\ref{sec:def}.
We note that $T(J_r^{dn})$ can be regarded as the space of $n$ labeled points in $\TP^{d-1}$ for which there exist a tropicalized $r-1$ plane containing them.

Since the intersection of the tropical hypersurfaces defined by a set
of polynomials does not always equal the tropical variety of the ideal they generate, we do not expect Kapranov rank and tropical rank to be
the same.
Similarly, the tropicalization of the image of a polynomial
function is not always equal to the image of its tropicalization; therefore, we do
not expect Barvinok rank and Kapranov rank to be the
same. However, in both of these cases one containment is true, implying
\begin{equation}
\label{eq1}
\textup{Tropical rank}(A) \leq \textup{Kapranov rank}(A) \leq \textup{Barvinok rank}(A).
\end{equation}
as shown in \cite[Theorem~1.4]{DevelinSantosSturmfels2003}.

We are
interested in studying Kapranov rank and tropical rank. The question
of whether these coincide is really a question about tropical bases.
Recall that a tropical basis for an ideal $I$ is a
finite generating set with hypersurface intersection equal to $T(I)$.
The authors of \cite{ctv} prove that any ideal $I$ generated by polynomials in $\C[x_1,\dots,x_n]$ has a tropical basis. It is of fundamental interest to understand the geometry of intersections of tropical hypersurfaces and varieties, and to develop methods to recognize tropical bases.

Using the language of tropical bases, it is natural to ask:
\begin{question}
For which numbers $d,n,$ and $r$ do the $(r+1)\times(r+1)$-minors of a $d\times n$ matrix form a tropical basis? Equivalently, for which $d,n,r$ 
does every $d\times n$ matrix of tropical rank at most $r$ have Kapranov rank at most $r$?
\end{question}

As a corollary to the following theorem, we get that for $d\times n$ matrices with $d$ or $n\leq 4$, the tropical rank and Kapranov rank are equal.
\begin{theorem}\cite[Theorem~5.5, 6.5]{DevelinSantosSturmfels2003}
\label{old theorem}
Let $A\in\R^{d\times n}$. If the tropical rank or the Kapranov rank of $A$ is $1$, $2$, or $\textup{min}(d,n)$, then they are equal.
\end{theorem}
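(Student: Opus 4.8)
Write $m=\textup{min}(d,n)$. The plan is to reduce the theorem to three statements --- that the $2\times 2$, the $3\times 3$, and the $m\times m$ minors of a $d\times n$ matrix of variables each form a tropical basis --- and then to treat those. Two elementary facts drive the reduction: by (\ref{eq1}) the tropical rank of $A$ is at most its Kapranov rank, and since the entrywise lift $\tilde a_{ij}=t^{a_{ij}}$ is a $d\times n$ matrix over $K:=\C\llbrace t\rrbrace$ it has ordinary rank at most $m$, so the Kapranov rank of $A$ is at most $m$; also both ranks are at least $1$, as every $1\times 1$ submatrix is tropically nonsingular. Suppose one of the two ranks of $A$ equals $r\in\{1,2,m\}$. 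If it is the tropical rank, then the Kapranov rank is at least $r$, and for $r\le 2$ the tropical-basis property of the $(r+1)\times(r+1)$ minors makes it at most $r$ (for $r=m$ this needs nothing further), so the ranks agree. If it is the Kapranov rank, then the tropical rank is at most $r$, and it cannot be at most $r-1$: for $r=1$ because the tropical rank is always at least $1$, and for $r\in\{2,m\}$ because then the tropical-basis property of the $r\times r$ minors would force the Kapranov rank down to $r-1$. So it is enough to establish the three tropical-basis statements.

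The $2\times 2$ case is immediate. If every $2\times 2$ minor of $A$ is tropically singular, then $a_{ij}+a_{k\ell}=a_{i\ell}+a_{kj}$ for all indices, so $a_{ij}=u_i+v_j$ with $u_i:=a_{i1}-a_{11}$ and $v_j:=a_{1j}$; the matrix $\tilde a_{ij}:=t^{u_i}t^{v_j}$ over $K$ has rank $1$ and valuation $A$, so the Kapranov rank of $A$ is at most $1$.

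For the $3\times 3$ case I would analyze the tropical-singularity conditions on the $3\times 3$ minors combinatorially. Up to rescaling rows and columns (which changes no rank), these conditions should force the columns of $A$, regarded as points of $\TP^{d-1}$, to be arranged along a tree with $d$ unbounded ends --- a candidate tropical line. Any such tree is the tropicalization of an honest line $L\subseteq K^d$; picking $n$ points on $L$ that tropicalize to the columns of $A$ and stacking them as columns yields a rank-$2$ matrix over $K$ with valuation $A$, so the Kapranov rank of $A$ is at most $2$. Extracting the tree from the minor conditions is the substantive step; the lift is then routine.

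For the $m\times m$ case, assume without loss of generality $d\le n$, so $m=d$. A lift $\tilde A$ of $A$ over $K$ has rank at most $d-1$ exactly when its rows satisfy a nontrivial relation $\sum_i\lambda_i\tilde A_i=0$; writing $b_i=\textup{val}(\lambda_i)$, one checks that such a $\tilde A$ can be built column by column with the prescribed valuations precisely when, for every column $j$, the minimum over $i$ of $b_i+a_{ij}$ is attained at least twice --- equivalently, when the point $b$ lies on all $n$ tropical hyperplanes $H_1,\dots,H_n$, where $H_j\subseteq\TP^{d-1}$ is cut out by column $j$ of $A$. For a square matrix ($n=d$) the statement reduces to the fact that a single polynomial, here $\det$, is a tropical basis for the principal ideal it generates (Kapranov's theorem). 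For $n>d$ I would induct on $d$: if some $(d-1)$-row submatrix already has tropical rank at most $d-2$, lift it by the inductive hypothesis and adjoin the remaining row, raising the rank by at most one, and we are done. The remaining case --- a matrix of ``corank one'' admitting no such submatrix --- is where the real content lies: the hypothesis then says only that every $d$ of the hyperplanes $H_j$ meet, and one must deduce that all $n$ of them meet. I expect this Helly-type statement to be the main obstacle. Since the $H_j$ are not tropically convex, the usual tropical Helly theorem does not apply, and a dedicated argument seems necessary --- plausibly a direct combinatorial analysis of the optimal assignments of a corank-one matrix, whose near-degeneracy should force enough structure to locate the common point $b$ explicitly.
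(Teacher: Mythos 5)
This statement is not proved in the paper at all: it is quoted from Develin--Santos--Sturmfels, so the relevant comparison is with their Theorems~5.5 and~6.5, which are exactly the two statements your reduction lands on. The reduction itself is sound: using $\textup{tropical rank}\leq\textup{Kapranov rank}\leq\min(d,n)$, the theorem does follow formally once one knows that the $2\times 2$, the $3\times 3$, and the $m\times m$ minors ($m=\min(d,n)$) are tropical bases, and your argument for the $2\times 2$ case (every $2\times 2$ minor singular forces $a_{ij}=u_i+v_j$, lift by $t^{u_i}t^{v_j}$) is complete and standard. Your dual reformulation of the $m\times m$ case is also correct: a column-by-column construction shows that Kapranov rank $\leq d-1$ is equivalent to the existence of a single point $b\in\TP^{d-1}$ lying on all $n$ tropical hyperplanes dual to the columns, while tropical rank $\leq d-1$ says every $d$ of these hyperplanes have a common point; this is close in spirit to the ``development by a column'' and witness-of-nonstable-intersection machinery used in Section~\ref{sec:5xn} of this paper for the $5\times n$ case.

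The genuine gap is that the two statements carrying all the content are not proved but only conjecturally outlined, and you say so yourself. For the $3\times 3$ case, the assertion that the vanishing of all $3\times 3$ tropical minors forces the columns to lie on a tree realizable as a tropicalized line is precisely Develin--Santos--Sturmfels Theorem~6.5; ``extracting the tree from the minor conditions'' is not a routine verification but the entire proof, and nothing in your sketch indicates how the $3\times 3$ conditions, which involve arbitrary triples of rows as well as columns, produce a single tree compatible with all of them. For the $m\times m$ case, the Helly-type statement (every $d$ of the $n$ hyperplanes meet implies all $n$ meet, given the corank-one situation) is again the whole of their Theorem~5.5; as you note, tropical convexity arguments do not apply to hyperplanes, and the ``dedicated argument'' you defer to is exactly what is missing. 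Also, the inductive step only disposes of the easy case where some row-deleted submatrix already has smaller tropical rank, so nothing is gained there. In short: correct skeleton, complete proof only of the rank-one case, and the two hard cases --- which are the theorem --- remain open in your write-up and are supplied only by the cited reference.
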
 

On the other hand, there exists a $7\times 7$
matrix with tropical rank 3 and Kapranov rank 4 (\cite[Corollary~7.4]{DevelinSantosSturmfels2003}). This matrix is obtained as the cocircuit incidence matrix of the Fano matroid $F_7$; the fact that the Kapranov rank and tropical rank of this matrix differ follows from the non-representability of $F_7$ over a field of characteristic $0$. This is the smallest known example of a matrix whose tropical rank and Kapranov rank are different and shows that the set of $4\times 4$ minors of a $7\times 7$ matrix do not form a tropical basis.

In this paper we answer the following question, asked explicitly by Develin, Santos, and Sturmfels, in the affirmative.
\begin{question}\cite[Section~8,(6)]{DevelinSantosSturmfels2003}
\label{quest:5x5}
Do the $4\times 4$ minors of a $5\times 5$ matrix form a tropical basis?
\end{question}

Our paper is organized as follows. In Section~\ref{sec:5x5}, we
compute the set of $5\times 5$ matrices of tropical rank at most $3$,
regarded as a polyhedral fan obtained as a common refinement of
hypersurfaces. We then compare it to the set of matrices of Kapranov
rank $\leq 3$, regarded as a subfan of the Gr\"obner fan of
$J^{55}_3$. For the computations, we apply the software
Gfan~\cite{gfan}. We have several techniques for drastically reducing
the computation time, which we describe. We then describe a general
technique for determining whether a set of polynomials forms a
tropical basis. Our analysis shows that the two fans above have the
same support, answering Question~\ref{quest:5x5}.
 
In Section~\ref{sec:5xn}, we prove our main theorem:
\begin{theorem}
\label{thm:main}
  For $n \geq 4$, the $4 \times 4$ minors of a $5 \times n$ matrix form a tropical basis.
\end{theorem}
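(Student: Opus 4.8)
The plan is to prove the equivalent assertion that every $A \in \R^{5\times n}$ of tropical rank at most $3$ has Kapranov rank at most $3$; by the inequalities \eqref{eq1} this is exactly what it means for the $4\times 4$ minors to form a tropical basis, and in particular the case of tropical rank $4$ never enters. I would induct on $n$. The case $n = 4$ is the corollary to Theorem~\ref{old theorem} recorded above, and it will also be used freely for $5\times 4$ submatrices; the base case $n = 5$ is precisely the content of the computation in Section~\ref{sec:5x5}. Throughout one may assume the tropical rank of $A$ equals $3$ exactly, since the smaller ranks are already covered by Theorem~\ref{old theorem}.

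It is convenient to rephrase ``Kapranov rank at most $3$'' geometrically: $A$ has Kapranov rank $\le 3$ exactly when the $n$ columns of $A$, viewed as points of $\TP^4$, all lie on a single tropical plane $L$ that is realisable over a field of characteristic $0$. For five rows this realisability is automatic, since every valuated matroid of rank $3$ on $5$ elements is realisable over $\C$ --- equivalently, the tropical Grassmannian and the Dressian coincide in this range. This is exactly the point where five rows differ from the $7\times 7$ Fano example, in which a non-realisable matroid obstructs the lift; so for us the entire difficulty is the purely combinatorial one of producing one tropical plane through all $n$ columns.

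For the inductive step take $n \ge 6$ and write $A = [\,A' \mid a_n\,]$. Simply invoking the inductive hypothesis on $A'$ is not enough: a rank-$\le 3$ lift of $A'$ determines a tropical plane depending on the chosen lift, and we need one that also passes through $a_n$. The heart of the argument is therefore to upgrade the induction to the statement that \emph{$A'$ admits a rank-$\le 3$ lift whose tropical column space contains $a_n$}. One route: by the base cases every $5$-column submatrix of $A$ already has Kapranov rank $\le 3$, so for each triple of columns $j_1,j_2,j_3$ the points $a_{j_1},a_{j_2},a_{j_3},a_n$ are coplanar on some tropical plane, and one then uses the tropical-rank-$3$ hypothesis on the $4\times 4$ minors joining $a_n$ to the remaining columns to amalgamate these local planes into a single plane through every column. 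Concretely one would locate a small subconfiguration of columns that rigidly pins down its tropical plane $L$ (fixing its type among the finitely many combinatorial types of tropical planes in $\TP^4$), deduce that every remaining column lies on $L$ by applying the $5$-column result one column at a time, and finally lift $L$ to an honest rank-$3$ matrix over $\C\{\{t\}\}$ using realisability.

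The main obstacle is this amalgamation-and-rigidity step, and in particular the degenerate strata in which no small subset of columns determines the plane --- for instance when many columns are tropically collinear, or concentrate on a single tropical line inside $L$ --- so that the one-column-at-a-time argument must be supported by a separate and delicate case analysis. In effect one needs a tropical Helly-type phenomenon: the hypothesis supplies coplanarity of every four (indeed five) of the columns, and this must be propagated to joint coplanarity of all $n$. The computational classification of Section~\ref{sec:5x5} is the essential finite input here, since it certifies that there is no local obstruction already among $5$ columns, and the work of the proof is to show that no obstruction can first appear at size $\ge 6$ either.
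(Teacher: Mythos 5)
Your setup is fine (the reduction to ``tropical rank $\le 3$ implies Kapranov rank $\le 3$,'' the $5\times 4$ base case, and the remark that every tropical plane in $\TP^4$ is realisable since the Dressian and tropical Grassmannian agree for $(3,5)$), but the proposal stops exactly where the proof has to start. The inductive step --- producing a rank-$\le 3$ lift of $A'$ whose tropicalized column space also contains the new column $a_n$, equivalently amalgamating the planes through the $5$-column subconfigurations into a single plane through all $n$ columns --- is the entire content of the theorem, and you explicitly defer it to an unspecified ``delicate case analysis.'' Moreover, the Helly-type principle you lean on ($5$-wise coplanarity propagates to joint coplanarity) is false as a general tropical statement: the $7\times 7$ Fano matrix has tropical rank $3$, so each of its $7\times 5$ submatrices has tropical rank $\le 3$ and hence (by the transposed $5\times 7$ case of this very theorem) Kapranov rank $\le 3$, i.e.\ every $5$ of its columns lie on a tropicalized plane in $\TP^6$; yet its Kapranov rank is $4$, so no single plane contains all seven columns. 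So any amalgamation argument must exploit the five-row hypothesis in an essential way, and nothing in your sketch (a ``rigidly pinned'' subconfiguration, one-column-at-a-time propagation, treatment of the degenerate collinear strata) is actually established. As written this is a plan with its central lemma missing, not a proof.

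For comparison, the paper's proof in Section~\ref{sec:5xn} does not induct on $n$ and does not use the $5\times 5$ computation. It deletes each of the five rows in turn and applies the known $4\times n$ case to obtain, for each $i$, an $i$-coordinate hyperplane $H_i\subseteq\TP^4$ through all $n$ columns; a short combinatorial analysis of types (Proposition~\ref{prop:3}, resting on Lemmas~\ref{lem:1} and~\ref{lem:2}) then shows that some pair $H_i,H_j$ must contain every column in its stable intersection, and by Proposition~\ref{prop:stablelift} that stable intersection is the tropicalization of a codimension-$2$ linear space over $K$, giving a rank-$3$ lift at once. Your column-by-column induction is closer in spirit to the paper's first strategy, ``development by a column'' \cite{elenasproof}, but that route too succeeds only after the 10--20 case analysis which your proposal omits.
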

\begin{corollary}
Let $A\in\R^{d\times n}$ with $d$ or $n\leq 5$. Then the tropical rank of $A$ equals the Kapranov rank of $A$.
\end{corollary}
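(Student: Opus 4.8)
The plan is to derive the corollary from Theorem~\ref{thm:main}, together with Theorem~\ref{old theorem} and the inequality~\eqref{eq1}, by a short case analysis on the tropical rank. First I would note that both the tropical rank and the Kapranov rank are invariant under transposition: the $(r+1)\times(r+1)$ minors of $A^{T}$ are obtained from those of $A$ by the coordinate relabeling $x_{ij}\leftrightarrow x_{ji}$, which preserves tropical vanishing and carries $J_{r}^{dn}$ onto $J_{r}^{nd}$. Hence I may assume $d\le 5$. Writing $r$ for the tropical rank of $A$, by~\eqref{eq1} it suffices to prove that the Kapranov rank of $A$ is at most $r$. I would also record the bounds $1\le r\le\min(d,n)$, together with the fact that the Kapranov rank of $A$ is likewise at most $\min(d,n)$, since $J^{dn}_{\min(d,n)}=(0)$ (there are no $(\min(d,n)+1)\times(\min(d,n)+1)$ submatrices) and $T((0))=\R^{dn}$.

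Next I would clear the cases that do not need Theorem~\ref{thm:main}. If $r\in\{1,2\}$ or $r=\min(d,n)$, then Theorem~\ref{old theorem} already gives that the Kapranov rank of $A$ equals $r$. If instead $r=\min(d,n)-1$, then the Kapranov rank of $A$ cannot equal $\min(d,n)$, since otherwise Theorem~\ref{old theorem} would force the tropical rank to equal $\min(d,n)\ne r$; being at most $\min(d,n)$, the Kapranov rank is therefore at most $\min(d,n)-1=r$. In particular this disposes of every $A$ with $\min(d,n)\le 4$.

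The only remaining possibility is that $r\notin\{1,2\}$ and $r\le\min(d,n)-2$, which forces $r\ge 3$ and $\min(d,n)\ge 5$; since $\min(d,n)\le d\le 5$, this pins down $d=5$, $n\ge 5$, and $r=3$. Here I would invoke Theorem~\ref{thm:main}: as $n\ge 4$, the $4\times 4$ minors of a $5\times n$ matrix form a tropical basis, so the set of $5\times n$ matrices of tropical rank at most $3$, namely the intersection of the tropical hypersurfaces cut out by those minors, coincides with $T(J_{3}^{5n})$, the set of $5\times n$ matrices of Kapranov rank at most $3$. Since $A$ has tropical rank $3$, it lies in $T(J_{3}^{5n})$, so its Kapranov rank is at most $3=r$, and with~\eqref{eq1} this finishes the proof. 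There is no real obstacle beyond Theorem~\ref{thm:main} itself; the one point worth flagging is that the ``$\min(d,n)$'' clause of Theorem~\ref{old theorem} already handles the rank-$(\min(d,n)-1)$ case by contraposition, so that the genuinely new input is needed only for $5\times n$ matrices of tropical rank exactly $3$ with $n\ge 5$.
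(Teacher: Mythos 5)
Your proof is correct and follows essentially the route the paper intends (the paper leaves this case analysis implicit after Theorem~\ref{thm:main}): reduce by transposition to $d\le 5$, dispose of tropical ranks $1$, $2$, $\min(d,n)$, and $\min(d,n)-1$ via Theorem~\ref{old theorem} together with~\eqref{eq1}, and use Theorem~\ref{thm:main} only for the remaining case of a $5\times n$ matrix of tropical rank $3$. The contraposition step for rank $\min(d,n)-1$ and the observation that Kapranov rank never exceeds $\min(d,n)$ are exactly the right points to make explicit.
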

Our first successful attempt to answer Question~\ref{quest:5x5} without relying on
computer calculations uses a technique which we call ``development by
a column.'' The proof splits into 10-20 cases which need to be treated
separately. We summarize the idea of this proof in Section~\ref{sec:5xn} and refer to the second version of the arXiv paper~\cite{elenasproof}, which addresses a gap in the first version and has been extended to cover the
$5\times n$ case. Then we give a proof of
Theorem~\ref{thm:main} using the technique of stable
intersections and an analysis of types similar to those in~\cite{ardiladevelin}.
With these techniques, we are able to dramatically reduce the number of
cases to consider.

We note that the authors of \cite{DevelinSantosSturmfels2003} were far
from the first to consider notions of rank in the min-plus
setting. Rather, there is a substantial body of literature along these
lines.  See, for instance, the work of Akian, Gaubert, and Guterman
\cite{akiangaubertguterman2}, \cite{akiangaubertguterman}, Izhakian
and Rowen~\cite{izhakianrowen}, and Kim and Roush~\cite{kimroush}; see
\cite[Figure~1]{akiangaubertguterman} for a comparison of ten
different notions of rank.

We finish this introduction with the following conjecture.
\begin{conjecture}
The $(r+1)\times(r+1)$ minors of a $d\times n$
matrix of variables are a tropical basis if and only if $r\leq 2$ or
$r\geq \textup{min}(d,n)-2$.
\end{conjecture}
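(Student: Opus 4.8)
.).

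The plan is to split the biconditional and reduce each half to a minimal collection of inputs, most of which are already in hand. For the ``if'' direction almost everything is available: when $r\ge\min(d,n)$ there are no $(r+1)\times(r+1)$ minors, so the statement is vacuous; when $r=\min(d,n)-1$ it is the equivalence ``tropical rank $=\min(d,n)\iff$ Kapranov rank $=\min(d,n)$'' of Theorem~\ref{old theorem}; and when $r\le 2$ it is the $r\in\{1,2\}$ part of Theorem~\ref{old theorem} together with~\eqref{eq1}. Moreover, when $\min(d,n)\le 5$ the range $3\le r\le\min(d,n)-3$ is empty, so the conjecture there is exactly the Corollary following Theorem~\ref{thm:main}. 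Thus, assuming $\min(d,n)\ge 6$, the remaining content is two assertions: (I) for $d\le n$ and $d\ge 6$, the $(d-1)\times(d-1)$ minors of a $d\times n$ matrix form a tropical basis (this is the case $r=\min(d,n)-2$ of ``if''); and (II) for every $3\le r\le\min(d,n)-3$ there is a $d\times n$ matrix of tropical rank $\le r$ and Kapranov rank $>r$ (this is ``only if'').

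For (II) the strategy is to produce one hard instance and propagate it by two elementary operations. \emph{Block padding}: if $A$ has tropical rank $\rho$ and Kapranov rank $\sigma$, then $A'=\left(\begin{smallmatrix}A&\infty\\\infty&0\end{smallmatrix}\right)$ (one may use a sufficiently generic large finite value in place of each $\infty$) has tropical rank $\rho+1$ and Kapranov rank $\sigma+1$; the tropical part is a short tropical-determinant computation, and for the Kapranov part one observes that every lift of $A'$ is, up to a negligible perturbation, block diagonal, hence of rank one more than a lift of $A$. \emph{Duplication}: adjoining to $A$ a copy of one of its rows or columns changes neither its tropical rank nor its Kapranov rank. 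Starting from a bad $6\times 6$ instance with $r=3$ and applying block padding $r-3$ times and then duplication, one reaches a bad $d\times n$ instance for exactly the $(d,n,r)$ with $r\ge 3$ and $\min(d,n)\ge r+3$ --- precisely range (II). So (II) reduces to a single assertion: \emph{there exists a $6\times 6$ matrix of tropical rank $3$ and Kapranov rank $4$}. The Fano matrix of~\cite{DevelinSantosSturmfels2003} is $7\times 7$, hence one size too large; I expect the required matrix to be found either by a \texttt{Gfan}-assisted search over cones of the Gr\"obner fan of $J_3^{66}$, in the spirit of Section~\ref{sec:5x5}, or among cocircuit-type incidence matrices of rank-$3$ or rank-$4$ (oriented) matroids on six elements, with ``Kapranov rank $\ge 4$'' certified by a non-liftability argument modeled on the Fano argument. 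Should no such matrix exist, the conjecture already fails at $(6,6,3)$, so settling this is the whole of the converse; but it is a concrete finite problem.

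For (I), the natural route is to push the method of Section~\ref{sec:5xn} --- stable intersection plus an analysis of types in the style of~\cite{ardiladevelin} --- from $d=5$ to arbitrary $d$: reduce, via stable intersection, to matrices on the few extremal cones, classify the types of the columns as a tropical point configuration in $\TP^{d-1}$ of tropical rank $\le d-2$, and for each type exhibit a classical lift of rank $\le d-2$ (equivalently, a monomial-free initial ideal of $J_{d-2}^{dn}$). \emph{This is where I expect the main obstacle of the whole conjecture to lie}: the type classification does not visibly stabilize --- for $d=5$ it collapses to a dozen or so cases, but there is no evident reason the count stays bounded or that ad hoc lifts keep working --- so a uniform structural input is needed. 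A promising candidate is that every tropical linear space of codimension $2$ in $\TP^{d-1}$ is realizable (equivalently, by matroid and valuated-matroid duality, $\mathrm{Dr}(2,d)=\mathrm{Gr}(2,d)$: rank-two tropical Pl\"ucker vectors always lift); \emph{if} one can show that the columns of a tropical-rank-$\le(d-2)$ matrix always lie on some, a priori possibly non-realizable, codimension-$2$ tropical linear space, then realizability of that space, and hence the desired lift, is automatic. The real work in (I) is therefore to bridge the gap between the determinantal condition ``all $(d-1)\times(d-1)$ minors tropically singular'' and membership of the columns in a Dressian point of rank $d-2$, and to handle separately whatever configurations violate the latter. (A cheap induction is unavailable: ``$(d,n,r)$ a tropical basis $\Rightarrow$ $(d+1,n+1,r+1)$ a tropical basis'' is false, since $r=2$ is always a tropical basis whereas $r=3$, $d=n=7$ is not, by the Fano example.)
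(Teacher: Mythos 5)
This statement is a conjecture in the paper, not a theorem: the authors offer no proof, and they explicitly single out the first genuinely open case ($d=n=6$, $r=3$) as the interesting next step. Your text is accordingly not a proof but a reduction program, and you are candid about that. The bookkeeping you do for the ``if'' direction is fine as far as it goes: $r\geq\min(d,n)$ is vacuous, $r=\min(d,n)-1$ and $r\leq 2$ follow from Theorem~\ref{old theorem} together with~(\ref{eq1}), and $\min(d,n)\leq 5$ is the corollary to Theorem~\ref{thm:main}. But everything of substance is deferred: assertion (I) (the case $r=\min(d,n)-2$, $\min(d,n)\geq 6$) and assertion (II) (bad examples for all $3\leq r\leq\min(d,n)-3$) are exactly the open content of the conjecture, and you leave both unresolved.

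Two concrete problems beyond the admitted incompleteness. First, for (II) your proposed source of a $6\times 6$ example --- cocircuit-type incidence matrices of matroids on six elements with a Fano-style non-representability certificate --- cannot work: as the paper notes, every matroid on at most six elements is representable over a field of characteristic $0$, so any $6\times 6$ example must come from a \emph{non-matroidal} obstruction, and no analogue of the Fano argument is available; this is precisely why the case is highlighted as open. Second, your block-padding lemma is not proved and is not obviously true on the Kapranov side. The tropical-rank half is fine (each complementary minor in an enlarged submatrix is itself tropically singular, so genericity of the large entries is irrelevant), but the claim that every lift of the padded matrix is ``up to a negligible perturbation, block diagonal'' does not rule out lifts in which the new row and column lie in the span of the old ones via coefficients of very negative valuation, with cancellations producing the required large valuations in the off-diagonal blocks; ruling this out (or otherwise proving that padding increases Kapranov rank by exactly one) needs a real argument, and no such direct-sum additivity is established in the paper or its references. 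For (I), you yourself identify the missing bridge --- from ``all $(d-1)\times(d-1)$ minors tropically singular'' to ``the columns lie on a codimension-$2$ tropical linear space'' --- and that bridge is essentially the statement to be proved in another guise, so invoking realizability of such spaces (which is indeed known) does not reduce the difficulty. In short: the easy reductions are correct, but the proposal leaves the conjecture open, and two of its auxiliary steps (the matroidal search for a $6\times 6$ example, and the Kapranov half of block padding) are respectively unworkable and unproven.
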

The next open case of $6\times 6$ matrices is of particular interest because an example of a $6\times 6$ matrix with tropical rank less than Kapranov rank would show that there are nonmatroidal obstructions to the equality of said ranks. Indeed, every matroid on at most six elements is representable over a field of characteristic $0$; see~\cite[Section 3(a)]{blackburn1973}.

We also ask: for which numbers $n$ and $r$ do the $(r+1)\times(r+1)$ minors of a $n\times n$ symmetric matrix form a tropical basis? What about Hankel matrices?

\vspace{0.6cm}
\noindent
{\bf{Acknowledgments:}}
We would like to thank Eva Feichtner and Bernd Sturmfels for helpful conversations, and Bernd for detailed comments on this paper.

\section{Definitions and notation}
\label{sec:def}
We remind the reader of the basic definitions in tropical geometry and
give \cite{maclagansturmfels} and \cite{tropgrass} as references. Let $K$ be the field whose
elements are power series in $t$ with complex coefficients and real
exponents, such that the set of exponents involved in a series is a
well-ordered subset of $\R$. The valuation map $\textup{val}:K^*\rightarrow
\R$ takes a series to the exponent of its lowest order term. 
Denote by
$\textup{val}:(K^*)^N\rightarrow\R^N$ the $N$-fold Cartesian product
of $\textup{val}$. The \emph{tropicalization} of a subvariety $V(I)$
of the torus $(K^*)^N$ defined by an ideal $I\subseteq
K[x_1,\dots,x_N]$ is $\textup{val}(V(I))\subseteq\R^N$.
(With small modifications to our definitions, we expect the results in this paper to hold for any complete algebraically closed non-Archimedean valued field $K$ with the image of the valuation map being dense in $\R$. In particular, the results are independent of the characteristic of the residue field of $K$.)

For
$\omega\in \R^N$, the \emph{$\omega$-degree} of a monomial $cx^a=cx_1^{a_1}\cdots x_N^{a_N}$ is
$\textup{val}(c)+\langle\omega,a\rangle$. The \emph{initial form}
$\textup{in}_\omega(f)\in \C[x_1,\dots,x_N]$ of a polynomial $f\in
K[x_1,\dots,x_N]$ with respect to $\omega$ is the sum of terms of the form
$\gamma t^bx^a$ ($\gamma\in \C$) in $f$ with minimal $\omega$-degree, but with $1$
substituted for $t$. Define the \emph{initial ideal}
$$\textup{in}_\omega(I):=\langle \textup{in}_\omega(f):f\in
I\rangle\subseteq \C[x_1,\dots,x_N].$$
The Fundamental Theorem of Tropical Geometry, variously attributed to Draisma, Kapranov, Speyer-Sturmfels (see~\cite{draisma},\cite{tropgrass}), says that $\textup{val}(V(I))$ equals the \emph{tropical variety} $T(I)$, with
$$T(I):=\{\omega\in\R^N:\textup{in}_\omega(I)\textup{ does not contain
  a monomial}\}.$$

The \emph{Gr\"obner complex} $\Sigma(I)$ of a homogeneous ideal $I$, see \cite{maclagansturmfels}, is the polyhedral complex consisting of all polyhedra
$$C_\omega(I):=\overline{\{\omega'\in\R^N:\textup{in}_\omega(I)=\textup{in}_{\omega'}(I)\}},$$
where $\omega$ runs through $\R^N$, and the closure is taken in the usual Euclidean topology of
$\R^N$. It is clear that the tropical variety $T(I)$ is the support of a subcomplex of $\Sigma(I)$, and we shall not distinguish between $T(I)$ and this subcomplex.

By the \emph{linear span} of
a polyhedron $P\subseteq \R^N$ we mean the $\R$-span of $P-P:=\{p-p':p,p'\in P\}$. The
intersection of the linear spans of all the polyhedra in a complex is called
the \emph{lineality space} of the complex. A complex is invariant
under translation by elements of its lineality space. Since $I$ is
homogeneous, the lineality space of $\Sigma(I)$ contains the $(1,\dots,1)$ vector and it makes sense to
consider $T(I)$ in the \emph{tropical projective torus} ${\TP}^{N-1}:=\R^N/\sim$, where
we mod out by coordinate-wise tropical multiplication by a constant.

If $I$ is a principal ideal $\langle f\rangle$, where
$f=\sum_ic_ix^{a_i}$ with $c_i\in K^*$, the tropical variety is called a \emph{hypersurface}. It
consists of all $\omega\in\R^N$ such that the minimum
\begin{equation}
\label{eq:tropmin}
\bigoplus_i \textup{val}(c_i)\odot \langle \omega,a_i\rangle
\end{equation}
 is
attained at least twice.

In the special case where $I$ is defined by polynomials with
coefficients in $\C$, the complex $\Sigma(I)$ is a fan.  In this paper, we
study two kinds of tropical varieties: those defined by linear ideals in $K[x_1,\dots,x_N]$, which yield polyhedral complexes; and those which are sets of matrices of Kapranov rank at most $r$, and are therefore polyhedral fans (since their ideals are defined over $\C$). In the latter case, we use the terms \emph{Gr\"obner fan} and \emph{Gr\"obner cones} for $\Sigma(I)$ and its cones. 

\begin{definition}
Let $A,B$ be polyhedral fans. The \emph{common refinement} of $A$ and $B$ is the fan
$$A\wedge B:=\{a\cap b:(a,b)\in A\times B\}.$$
\end{definition}

\begin{definition}
Given a set $\F=\{f_1,\dots,f_m\}\subseteq K[x_1,\dots,x_N]$, its \emph{tropical prevariety} is the intersection
$$\bigcap_i T(\langle f_i\rangle).$$
The set $\F$
is a \emph{tropical basis} if its prevariety equals $T(\langle f_1,\dots,f_m\rangle)$. If each $T(\langle f_i\rangle)$ is a fan, then the prevariety can be regarded as their common refinement and hence is a fan.
\end{definition}

We can now give precise definitions of rank.
\begin{definition}
Let $\F_r^{dn}\subseteq K[x_1,\dots,x_{dn}]$ be the set of
$(r+1)\times(r+1)$ minors of the $d\times n$ matrix $\{x_{ij}\}$. Let $J_r^{dn}=\langle f:f\in\F_r^{dn}\rangle$, and $A\in
  \R^{d\times n}$.
\begin{itemize}
\item $A$ has \emph{tropical rank} at most $r$ if 
$A\in\bigcap_{f\in \F_r^{dn}}T(\langle f\rangle)$.
\item $A$ has \emph{Kapranov rank} at most $r$ if $A\in T(J_r^{dn})$.
\end{itemize}
\end{definition}
Equivalently, by the Fundamental Theorem, a matrix $A$ has Kapranov
rank at most $r$ if it has a \emph{lift} $\tilde{A}$ over $K$ of rank
at most $r$. By a lift we mean a matrix $\tilde{A}$ such that
$\textup{val}(\tilde{A})=A$.
\begin{example}
Let $f\in K[x_{11},\dots,x_{33}]$ be the $3\times 3$ determinant, 
$$A=\left(\begin{array}{rrr}
0&1&2\\
1&1&1\\
0&1&1\\
\end{array}\right)~~~~~~~\textup{ and }~~~~~~~~
\tilde{A}=\left(\begin{array}{rrr}
  1&t&t^2\\ 2t&3t&5t\\ 1+2t&4t&5t+t^2\\
\end{array}\right).$$
The tropical hypersurface $T(\langle f\rangle)$ contains $A$
since (\ref{eq:tropmin}) attains its minimum thrice. Hence, $A$ has tropical rank
$\leq 2$. Equivalently, $\textup{in}_A(f)=x_{11}x_{22}x_{33}-x_{11}x_{23}x_{32}+x_{12}x_{23}x_{31}$ is not a monomial. The tropical rank is
not $\leq 1$ since
$\textup{in}_A(x_{11}x_{22}-x_{12}x_{21})=x_{11}x_{22}$.  To argue about the Kapranov rank, we consider the lift $\tilde A\in
K^{3\times 3}$ above. The classical rank of $\tilde A$ is $2$. By the fundamental theorem, or since $\textup{in}_A(J^{33}_2)$ is monomial-free, $A$ has Kapranov rank at most $2$. By (\ref{eq1}), it is equal to $2$.
\end{example}

\section{The $5\times 5$ case}
\label{sec:5x5}
In this section, we compute the prevariety of $5\times 5$
matrices of tropical rank at most $3$ as the common refinement of the 25 hypersurfaces in
$\R^{5\times 5}$ defined by the $4\times 4$ minors of the following matrix:
\begin{footnotesize}
$$\left(\begin{array}{rrrrr}
x_{11} &  x_{12} & x_{13} & x_{14} & x_{15} \\
x_{21} &  x_{22} & x_{23} & x_{24} & x_{25} \\
x_{31} &  x_{32} & x_{33} & x_{34} & x_{35} \\
x_{41} &  x_{42} & x_{43} & x_{44} & x_{45} \\
x_{51} &  x_{52} & x_{53} & x_{54} & x_{55} \\
\end{array}\right).$$
\end{footnotesize}We will then compare it to the tropical variety $T(J_3^{55})$.

Most of our results in this section are based on computer calculations. We explain how to
reproduce these results on our webpage\\
{\footnotesize\url{http://www.math.tu-berlin.de/~jensen/software/gfan/examples/4x4of5x5}}\\
which also contains the complete output of our computations. We have used the software
Gfan~\cite{gfan} and the linear programming libraries
cddlib~\cite{cdd} and SoPlex~\cite{wunderling} with LP-certificates
being verified in exact arithmetic.

First, we compute the hypersurfaces of the
$4\times 4$ minors. Each hypersurface $H_{i}$ is the codimension-one
skeleton of the inner normal fan of the Newton polytope $\textup{New}(g)$ of a $4\times 4$
minor $g\in\F_3^{55}$. The normal fan is denoted $\textup{NF}(\textup{New}(g))$. The \emph{f-vector} records the number of cones of each
dimension, starting with the lineality space. We have
$$\textup{f-vector}(H_{i})=(1, 16, 120, 528, 1392, 2176, 1968, 978, 240),$$
meaning that each $H_{i}$ has $240$ cones of dimension $25-1$, and one, the lineality space, of dimension $16$. We shall refer to the $17$-dimensional cones as \emph{rays}, since they have dimension one modulo the lineality space.

The Newton polytope of a $4\times 4$ minor is a Birkhoff polytope which has
a symmetry group of order $4!\cdot 4!\cdot 2=1152$, namely permutations of coordinates
according to row-interchange, column-interchange and transposition of
the matrix. The symmetries are also seen in the hypersurface: there are three orbits of maximal cones, consisting of 72, 72, and 96 cones, respectively.

While, in theory, the $24$-fold common refinement of the hypersurfaces can be computed in $240^{25}$
iterations by running through an enumeration tree with $25$ levels,
each with $240$ choices, care must be taken for the computation to
finish. It is essential to cut branches off of the enumeration
tree. Gfan does this by writing the support of each hypersurface
as a \emph{disjoint} union of half open cones. Now, when we reach a
node in the enumeration tree, it may happen that the intersection of
the chosen half open cones along the path from the root is empty. In this case,
we may ignore the subtree of the node, see~\cite[Section~7.2]{phdthesis}.


Another trick is to exploit the $5!\cdot 5!\cdot 2=28800$ order symmetry of the common
refinement (interchanging rows and columns and transposing) by
restricting the computation to a fundamental domain of the group
action on $\R^{25}$.
In general, let $G\subseteq S_N$ be a subgroup acting on $\R^N$ by permuting coordinates. Choose a total ordering $\preceq$ on $\R^N$ satisfying
$$u\preceq v\Rightarrow (u+w\preceq v+w\textup{ and } su\preceq sv) \textup{ for all } u,v,w\in\R^N\textup{ and }s\in\R_{>0}.$$
Consider the fundamental domain of the group action
$$\{\omega\in\R^N:\forall \sigma\in G:\omega\preceq
\sigma(\omega)\}=\bigcap_{\sigma\in
  G}\{\omega\in\R^N:\omega\preceq\sigma(\omega)\}.$$ It is easy to see
that the sets in the intersection are convex. With a separation
argument, one can show that their closures are closed half spaces.
Hence the intersection of their closures is a closed polyhedral cone whose orbit covers all
of $\R^N$. In our case, intersecting this cone
with the hypersurface fans (after modding out with a $9$-dimensional lineality space), we get new fans with fewer than 240
maximal cones. Computing the common refinement of the new fans, we get
a set of $1505320$ half open cones of various dimensions whose orbits under the symmetry group cover the prevariety $\bigcap_i H_i$. The fan structure can be different from that of $\bigwedge_i H_i$ since we refined with the fundamental domain. However, the cones of $\bigwedge_i H_i$ are easy to reconstruct: simply pick a relative interior point $\omega$ of each computed cone and compute the cone in $\bigwedge_{f\in\F_3^{55}} \textup{NF}(\textup{New}(f))$ containing it in its relative interior. Although many cone orbits are computed more than once,
this symmetry trick reduces the computation time considerably. The entire
computation took two weeks on a four processor computer to finish, and we obtain the following result:

\begin{proposition}
\label{prop:prevariety}
The set of $5\times 5$ matrices of tropical rank at most $3$, equipped with its common refinement structure, is a non-simplicial, pure $21$-dimensional polyhedral fan in $\R^{25}$ with f-vector:
$$(1, 1450, 28450, 257300, 1418450, 5309320, 14197000, 27724300,$$
$$ 39608950, 40645950, 28590990, 12424200, 2521800).$$
Its lineality space $L$ has dimension $9$. As a spherical complex on the $15$-sphere in $\R^{25}/L$ it has Euler characteristic $-3120$.
\end{proposition}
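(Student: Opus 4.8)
The plan is to identify the set of $5\times 5$ matrices of tropical rank at most $3$ with the common refinement $\bigwedge_{f\in\F_3^{55}}\textup{NF}(\textup{New}(f))$ of the $25$ tropical hypersurfaces cut out by the $4\times 4$ minors, and then to read off the f-vector, the lineality dimension, and the Euler characteristic from this fan. By definition a matrix $A$ has tropical rank at most $3$ exactly when, for every $4\times 4$ minor $f$, the tropical polynomial attains its minimum at $A$ at least twice; hence the prevariety is precisely the support of this common refinement, and since each minor has coefficients in $\{0,\pm1\}\subseteq\C$, each hypersurface --- and therefore the refinement --- is a fan. As a first sanity check, the classical determinantal variety of $5\times 5$ matrices of rank at most $3$ has dimension $r(d+n-r)=3\cdot 7=21$, which will match the claimed dimension.

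First I would compute each hypersurface $H_i$ explicitly as the codimension-one skeleton of the normal fan of the corresponding Birkhoff polytope, obtaining the f-vector and the three orbits of maximal cones under the order-$1152$ symmetry of a single minor. A naive common refinement would require traversing an enumeration tree of depth $25$ with branching factor $240$, which is infeasible, so the key is aggressive pruning: following \cite[Section~7.2]{phdthesis} I would write the support of each $H_i$ as a \emph{disjoint} union of half-open cones, so that at each node of the tree one can test whether the intersection of the chosen half-open cones along the path from the root is already empty and, if so, discard the entire subtree.

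The second essential reduction exploits the order-$28800$ symmetry group of the refinement (permuting rows, permuting columns, transposing). Choosing a total order $\preceq$ on $\R^{25}$ compatible with addition and positive scaling, the fundamental domain $\bigcap_{\sigma}\{\omega:\omega\preceq\sigma(\omega)\}$ is a polyhedral cone whose orbit covers $\R^{25}$; intersecting each $H_i$ with it produces fans with far fewer than $240$ maximal cones, and the common refinement of these restricted fans is a manageable collection (about $1.5\times 10^{6}$) of half-open cones whose orbits cover the prevariety. To recover the genuine fan $\bigwedge_{f}\textup{NF}(\textup{New}(f))$ I would, for each computed cone, pick a relative interior point $\omega$ and locate the cone of the full common refinement containing $\omega$ in its relative interior; grouping these into orbits and counting gives the f-vector. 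Purity and the dimension $21$ then follow by inspecting the maximal cones; the lineality space is the $9$-dimensional span of the tropically rank-one matrices $\{a_i\odot b_j\}$, equivalently the intersection of the $16$-dimensional lineality spaces of the $H_i$; and the Euler characteristic of the induced regular CW structure on the $15$-sphere in $\R^{25}/L$ is the alternating sum $\sum_{d=10}^{21}(-1)^{d}f_d=-3120$ of the f-vector entries lying above the lineality space.

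The main obstacle is purely computational: guaranteeing that the refinement terminates and that its output is trustworthy. Both the branch-pruning and the symmetry reduction are indispensable, and even together they leave a computation that runs for roughly two weeks on a four-processor machine, carried out with Gfan~\cite{gfan} and the LP backends cddlib~\cite{cdd} and SoPlex~\cite{wunderling}, with every LP certificate re-verified in exact arithmetic. A secondary subtlety is that the fan structure computed inside the fundamental domain is finer than that of $\bigwedge_f\textup{NF}(\textup{New}(f))$, so the reconstruction step is genuinely needed to report the f-vector of the correct complex, and the orbit bookkeeping --- since many orbits are produced more than once --- must be handled carefully to avoid miscounting.
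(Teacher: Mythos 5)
Your proposal is correct and follows essentially the same route as the paper: the proposition is established by exactly this Gfan computation of $\bigwedge_{f\in\F_3^{55}}\textup{NF}(\textup{New}(f))$, using the half-open-cone pruning of the enumeration tree, the restriction to a fundamental domain of the order-$28800$ symmetry group, and the reconstruction of the true refinement cones from relative interior points, with the f-vector, $9$-dimensional lineality space, and Euler characteristic read off from the verified output. The only caveat, which you already acknowledge, is that the ``proof'' is ultimately a certified computation rather than a hand argument, so its validity rests on the exact-arithmetic verification of the LP certificates and the reproducibility of the computation.
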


The maximal cones of the common refinement come in $162$ orbits, while the 1450 rays come in 5 orbits with representatives listed below: 

\begin{footnotesize}
\begin{equation}
\label{eq:vec1}
\pm
\left(\begin{array}{rrrrr}
16 & -4 & -4 & -4 & -4 \\
-4 & 1 & 1 & 1 & 1 \\
-4 & 1 & 1 & 1 & 1 \\
-4 & 1 & 1 & 1 & 1 \\
-4 & 1 & 1 & 1 & 1 \\
\end{array}\right),
~
\left(\begin{array}{rrrrr}
 -4 & -4 & -4 & 6 & 6\\
 -4 & -4 & -4 & 6 & 6\\
 -4 & -4 & -4 & 6 & 6\\
6 & 6 & 6 & -9 & -9\\
6 & 6 & 6 & -9 & -9\\
\end{array}\right),
\end{equation}

$$\left(\begin{array}{rrrrr}
 -3 & -3 & -3 & 7 & 2\\
 -3 & -3 & -3 & 7 & 2\\
 -3 & -3 & -3 & 7 & 2\\
7 & 7 & 7 & -8 & -13\\
2 & 2 & 2 & -13 & 7\\
\end{array}\right),
~
\left(\begin{array}{rrrrr}
 3 & 3 & -7 & -7 & 8\\
 3 & 3 & -7 & -7 & 8\\
 -7 & -7 & 8 & 8 & -2\\
 -7 & -7 & 8 & 8 & -2\\
8 & 8 & -2 & -2 & -12\\
\end{array}\right).$$
\end{footnotesize}The first three matrices have tropical and Kapranov rank $2$, while the ranks are 3 for the last two matrices.

Next, we consider the set of $5\times 5$ matrices of Kapranov rank at most $3$, that is, the tropical
variety $T(J_3^{55})$. It inherits an
underlying polyhedral fan structure from the Gr\"obner fan $\Sigma(J_3^{55})$. Since $J_3^{55}$ is prime, by \cite[Theorem~14]{ctv}, $T(J_3^{55})$ is connected in codimension one  and can be traversed by \cite[Algorithm~8]{ctv}. A one hour Gfan computation, taking advantage of symmetry, gives:
\begin{proposition}
\label{prop:variety}
The set of $5\times 5$ matrices of Kapranov rank at most $3$, considered as a subfan of the Gr\"obner fan of $J^{55}_3$, is a simplicial, pure $21$-dimensional polyhedral fan in $\R^{25}$ with f-vector:
$$(1, 3250, 53650, 421750, 2076700, 7112320, 17790400, 33156700,$$
$$ 46002550, 46497750, 32556390, 14179200, 2894400).$$
Its lineality space $L$ has dimension $9$. As a spherical complex on the $15$-sphere in $\R^{25}/L$ it has Euler characteristic $-3120$. Each maximal cone has tropical multiplicity one.
\end{proposition}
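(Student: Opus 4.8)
The plan is to compute $T(J_3^{55})$ directly as a subfan of the Gr\"obner fan $\Sigma(J_3^{55})$ by a symmetric tropical traversal, and then read off all the combinatorial data claimed in Proposition~\ref{prop:variety}. Since $J_3^{55}$ is prime and homogeneous, \cite[Theorem~14]{ctv} guarantees that the subcomplex $T(J_3^{55})$ of $\Sigma(J_3^{55})$ is pure and connected in codimension one; equivalently, its maximal cones are the vertices of a connected graph in which two cones are adjacent precisely when they share a ridge. This is the setting in which the traversal algorithm \cite[Algorithm~8]{ctv} enumerates all maximal cones: starting from one maximal cone, for each ridge of the current cone one computes the ``flip'' to the adjacent maximal cone across that ridge, a flip being a Gr\"obner-basis computation for the initial ideal supported on the ridge.

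First I would seed the traversal with one maximal cone of $T(J_3^{55})$. The classical determinantal variety of $5\times 5$ matrices of rank at most $3$ is irreducible of dimension $3(5+5-3)=21$, so $T(J_3^{55})$ is pure of dimension $21$; the \texttt{tropicalstartingcone} routine in \texttt{Gfan}~\cite{gfan} produces a $21$-dimensional Gr\"obner cone lying in $T(J_3^{55})$ (for instance by perturbing the valuation of an explicit rank-$3$ lift into the relative interior of a top-dimensional cell) and certifies its dimension.

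Next I would run the traversal modulo the group $G$ of order $5!\cdot 5!\cdot 2 = 28800$ generated by row permutations, column permutations, and transposition of the matrix of variables; each of these fixes $J_3^{55}$ and hence acts on $\Sigma(J_3^{55})$ preserving the subcomplex $T(J_3^{55})$. It therefore suffices to store one representative per $G$-orbit of maximal cones and, at each representative, to flip across one ridge per orbit of ridges of that cone; \texttt{Gfan}'s symmetric traversal implements exactly this, with all linear-programming feasibility tests backed by exact-arithmetic certificates via \texttt{cddlib}~\cite{cdd} and \texttt{SoPlex}~\cite{wunderling}. From the orbit representatives one reconstructs every maximal cone together with all of its faces, yielding the $f$-vector $(1,3250,\dots,2894400)$. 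The leading entry $1$ is the common lineality space $L$: since $J_3^{55}$ is homogeneous and invariant under the row/column rescalings $x_{ij}\mapsto x_{ij}+a_i+b_j$, $L$ contains the corresponding $9$-dimensional subspace of $\R^{25}$, and the computation confirms $\dim L = 9$. The $f$-vector then has $13$ entries, forcing top dimension $9+12=21$; ``pure'' is the assertion, checked orbit by orbit, that every maximal cone attains this dimension, and ``simplicial'' is the assertion that each maximal cone is spanned modulo $L$ by exactly $12$ rays. The induced regular cell complex on the $15$-sphere $(\R^{25}/L)\cap S^{15}$ has a $(k-1)$-cell for each cone of dimension $9+k$, so its Euler characteristic is $\sum_{k\ge 1}(-1)^{k-1}f_k = 3250-53650+\dots-2894400 = -3120$ (the same value as for the prevariety in Proposition~\ref{prop:prevariety}, a necessary consistency check).

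Finally, for the multiplicity claim, at a relative interior point $\omega$ of each maximal cone I would compute $\textup{in}_\omega(J_3^{55})$ and sum the multiplicities of its minimal primes, all of which have dimension $21$; \texttt{Gfan} reports this tropical multiplicity, which turns out to be $1$ on every orbit of maximal cones. The main obstacle is the size of the computation rather than any conceptual difficulty: the graph of maximal cones is large and each flip is a nontrivial Gr\"obner-basis computation over $\C$, so feasibility of the approach rests on the factor-of-$|G|$ speedup from the symmetric traversal and on keeping the exact linear-programming and Gr\"obner-basis steps small; with these optimizations the entire computation finishes in about an hour.
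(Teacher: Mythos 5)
Your proposal follows essentially the same route as the paper: the paper's justification is precisely a symmetric Gfan traversal of $T(J_3^{55})$ inside the Gr\"obner fan, using primality of $J_3^{55}$ and \cite[Theorem~14, Algorithm~8]{ctv} for connectedness in codimension one, with exact LP certificates and the order-$28800$ symmetry group, taking about an hour. Your account of seeding the traversal, exploiting symmetry, and reading off the $f$-vector, lineality space, Euler characteristic, and multiplicities matches the paper's computation, so there is nothing further to add.
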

Along  with the 1450 rays of the prevariety, the tropical variety has 1800 additional rays in one orbit with orbit representative
\begin{footnotesize}
\begin{equation}
\label{eq:vec2}
\left(\begin{array}{rrrrr}
 -2 &  -2 & 0 & 2 & 2\\
 -2 &  -2 & 0 & 2 & 2\\
 0 &  0 &2 &  -1 &  -1\\
2 & 2 & -1 & 1 &  -4\\
2 & 2 & -1 &  -4& 1 \\
\end{array}\right).
\end{equation}
\end{footnotesize}Each has tropical and Kapranov rank $3$. The tropical convex hull of the columns of the matrix (\ref{eq:vec2}), shown in Figure~\ref{fig:convexhull}, is contained in a tropicalized $2$ plane in $\TP^4$. The number of orbits of maximal cones in the prevariety is 175. Later in this section we will explain how the number of orbits changed from 162 to 175.
We do not know whether the tropical variety is shellable; the work of Markwig and Yu shows that the corresponding variety of matrices of rank at most two is in fact shellable with a suitable fan structure\cite{markwigyu}.  We would expect the homology of our variety to be concentrated in the top dimension in view of the work of Hacking \cite{hacking}.
\begin{figure}[htpd]
\begin{center}
\hspace{-0.5cm}\scalebox{1}{\input{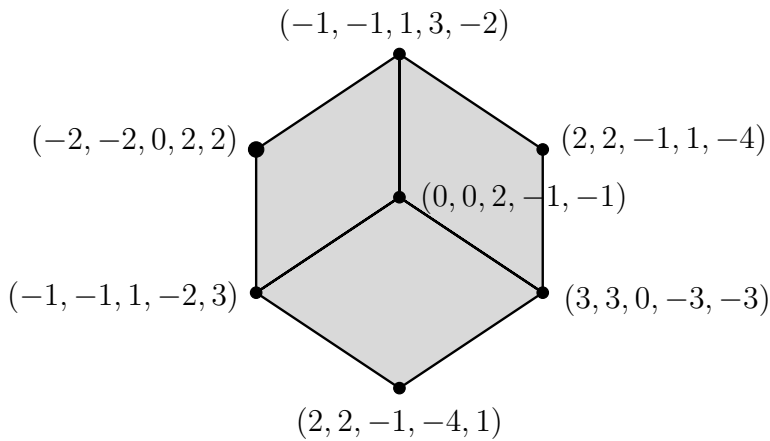}}
\end{center}
\caption{The convex hull in $\TP^4$ of the columns of the matrix (\ref{eq:vec2}). This is a polyhedral complex with f-vector $(7,9,3)$. Its support is not convex in the classical sense.}
\label{fig:convexhull}
\end{figure}

We wish to show that the fans of Proposition~\ref{prop:prevariety} and ~\ref{prop:variety} have the same support. We have seen that they have the same dimension, lineality space, and Euler characteristic; furthermore, random points from the support of one fan can
be checked for containment in the support of the other. In addition,
both fans are tropically balanced (with weight $1$) and connected in codimension 1. By further
investigation of the links of ridges of the two fans, it is possible
to come up with an ad hoc argument that the fans must have the same
support. However, a general method for checking that a tropical
prevariety equals a tropical variety is more appropriate.

We now describe such a method.
Our idea is to
compute the Gr\"obner fan of the ideal inside each maximal cone of the
prevariety. In other words, we compute the common refinement of the prevariety with the Gr\"obner fan. Such a restricted Gr\"obner fan
computation is more complicated to implement than a usual Gr\"obner fan
computation. We refer to \cite{symmetricfans} for a concrete algorithmic strategy.
Since the Gr\"obner fan is complete, the resulting fan is just a refinement of the prevariety. For each cone in the refinement, we now check if it is contained in
the tropical variety by checking that its initial ideal is monomial-free.

Another Gfan computation reveals that in our case the common refinement of the prevariety and the Gr\"obner fan happens to equal the tropical variety, so we do not have to compute any initial ideals:
\begin{proposition}
\label{prop:refinement}
The common refinement of the prevariety defined by the $4\times 4$ minors of a $5\times 5$ matrix and $\Sigma(J_3^{55})$ equals $T(J_3^{55})$ as a fan. That is, the tropical variety is itself a refinement of the prevariety.
\end{proposition}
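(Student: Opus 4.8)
The plan is to carry out the restricted Gr\"obner fan computation outlined above and then match its output, cone by cone, against Proposition~\ref{prop:variety}. Concretely, I would begin from the fan structure on the prevariety produced for Proposition~\ref{prop:prevariety}, whose maximal cones fall into finitely many orbits under the order-$28800$ group generated by row permutations, column permutations, and transposition. For each orbit representative $C$ I would compute the common refinement $\Sigma(J_3^{55})\wedge\{C\}$, i.e.\ the cones $C\cap\sigma$ as $\sigma$ ranges over the Gr\"obner cones of $J_3^{55}$ meeting $C$. This is a \emph{restricted} Gr\"obner traversal: instead of enumerating all of $\Sigma(J_3^{55})$, one explores only those Gr\"obner cones that intersect the fixed cone $C$, flipping across exactly the facets of the current Gr\"obner cone that pass through the relative interior of $C$; the algorithmic details are those of \cite{symmetricfans}, with every linear-programming step certified in exact arithmetic. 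Since $\Sigma(J_3^{55})$ is complete, the union over all $C$ of these local pieces is automatically a fan refining the prevariety.

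Having assembled this refinement, the next step in principle is to decide, for each of its cones, whether it lies in $T(J_3^{55})$ by testing at a relative interior point $\omega$ whether $\textup{in}_\omega(J_3^{55})$ is monomial-free; the cones passing this test would constitute $T(J_3^{55})$. The computation instead reveals the stronger phenomenon that \emph{every} cone of the refinement already lies in $T(J_3^{55})$, and that the refinement coincides --- orbit for orbit, with matching $f$-vector --- with the fan of Proposition~\ref{prop:variety}. Thus no initial-ideal computation is needed, and Proposition~\ref{prop:refinement} follows directly from the equality of the two assembled fans. Combining this with the completeness of the Gr\"obner fan, the support of the common refinement equals the support of the prevariety, so the prevariety of the $4\times4$ minors and $T(J_3^{55})$ have the same support; this is exactly the assertion that the $4\times4$ minors of a $5\times5$ matrix form a tropical basis, answering Question~\ref{quest:5x5}.

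The main obstacle here is implementational rather than conceptual: the restricted Gr\"obner traversal is substantially more delicate than an ordinary one, since one must correctly identify which facet flips remain inside $C$, maintain the disjoint half-open-cone bookkeeping of \cite{phdthesis} simultaneously on the prevariety side and the Gr\"obner side, and exploit the symmetry group without either losing cones or being confused by orbit representatives that are recomputed several times. A secondary subtlety is that the fan structure used on the prevariety --- inherited from the fundamental-domain computation --- need not equal $\bigwedge_{f\in\F_3^{55}}\textup{NF}(\textup{New}(f))$, so before comparing with Proposition~\ref{prop:variety} one must recover the cones of the genuine common refinement by the relative-interior-point procedure described in Section~\ref{sec:5x5}; this reconciliation is what accounts for the change from $162$ to $175$ orbits of maximal cones. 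Once these pieces are in place, checking that the assembled fan equals the fan of Proposition~\ref{prop:variety} is a finite, if large, verification.
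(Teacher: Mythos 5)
Your plan --- compute the common refinement of the prevariety with $\Sigma(J_3^{55})$ by a restricted, symmetry-exploiting Gr\"obner traversal in the sense of \cite{symmetricfans}, then verify by direct comparison that the assembled refinement coincides with the fan of Proposition~\ref{prop:variety}, so that no initial-ideal (monomial-freeness) tests are needed --- is exactly the paper's computational proof of Proposition~\ref{prop:refinement}. One small correction: the change from $162$ to $175$ orbits of maximal cones is caused by the Gr\"obner refinement itself subdividing certain cones of the prevariety (as analyzed at the end of Section~\ref{sec:5x5}), not by the reconciliation of the fundamental-domain fan structure with $\bigwedge_{f\in\F_3^{55}}\textup{NF}(\textup{New}(f))$, which recovers the $162$-orbit structure.
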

\begin{corollary}
The $4 \times 4$ minors of a $5 \times 5$ matrix form a tropical basis.
\end{corollary}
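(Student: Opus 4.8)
The plan is to deduce the corollary directly from Proposition~\ref{prop:refinement} by unwinding the definitions of Section~\ref{sec:def}. By the definition of a tropical basis, the set $\F_3^{55}$ of $4\times 4$ minors of a $5\times 5$ matrix of variables is a tropical basis exactly when its tropical prevariety $\bigcap_{f\in\F_3^{55}} T(\langle f\rangle)$ coincides, as a subset of $\R^{25}$, with $T(J_3^{55})$. By the definition of tropical rank, the prevariety is the set of $5\times 5$ matrices of tropical rank at most $3$, i.e.\ the fan of Proposition~\ref{prop:prevariety}; by the definition of Kapranov rank, $T(J_3^{55})$ is the set of matrices of Kapranov rank at most $3$, i.e.\ the fan of Proposition~\ref{prop:variety}. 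So it suffices to check that these two fans have the same support.

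First I would record the easy inclusion. By the inequality~(\ref{eq1}), any matrix of Kapranov rank at most $3$ has tropical rank at most $3$, so $T(J_3^{55})$ is contained in the prevariety; this requires no computation and in fact holds for all $d,n,r$.

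For the reverse inclusion I would invoke Proposition~\ref{prop:refinement}, together with the observation that the Gr\"obner fan $\Sigma(J_3^{55})$ is complete. Completeness means that refining the prevariety $\bigwedge_{f\in\F_3^{55}} T(\langle f\rangle)$ against $\Sigma(J_3^{55})$ merely subdivides it: for each cone $a$ of the prevariety one has $a=\bigcup_b (a\cap b)$ as $b$ ranges over $\Sigma(J_3^{55})$, so the common refinement has the same support as the prevariety. Proposition~\ref{prop:refinement} identifies this common refinement with $T(J_3^{55})$ as a fan, hence in particular as a support. Combining the two inclusions, the prevariety and $T(J_3^{55})$ have the same support, which is precisely the statement that $\F_3^{55}$ is a tropical basis.

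Given the propositions already established in this section, there is no real obstacle remaining: the only care needed is in matching the two notions of rank to the two computed fans, and in noting that refining a fan against a complete fan preserves its support. The genuine content of the $5\times 5$ case is hidden inside Proposition~\ref{prop:refinement} — the multi-week restricted Gr\"obner-fan computation in Gfan — whose outcome (no maximal cone of the prevariety carrying a monomial-containing initial ideal) is exactly what forces the minors to be a tropical basis.
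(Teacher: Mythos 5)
Your argument is correct and is essentially the paper's own proof: the paper deduces the corollary from Proposition~\ref{prop:refinement} in one line, observing that refining with the complete fan $\Sigma(J_3^{55})$ does not change the support of the prevariety, which is exactly your key step. Your extra remarks (unwinding the rank definitions, the inclusion from~(\ref{eq1})) are fine but not needed beyond that observation.
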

\begin{proof}
The corollary follows from the proposition since refining with $\Sigma(J_3^{55})$ does not change the support of the prevariety.
\end{proof}

We note that the tropical basis test described above can be combined with the constructive proof of the existence of tropical
bases~\cite[Theorem~11]{ctv} to give an extended version of
\cite[Algorithm~5]{ctv} for computing tropical bases of general ideals, not just ideals defining curves:

\begin{algorithm}[Tropical basis]$ $

\noindent
{\bf Input:} A finite set of generators $\F\subseteq \C[x_1,\dots,x_N]$ of an ideal $I:=\langle\F\rangle\subseteq K[x_1,\dots,x_N]$.\\
{\bf Output:} A tropical basis of $I$.
\begin{enumerate}
\item Compute the common refinement $A:=\bigwedge_{f\in \F}T(\langle f\rangle)$.
\item For every cone $B\in A$, compute the common refinement
  $D:=\textup{faces}(B)\wedge \Sigma(I)$, by computing all Gr\"obner
  cones inside $B$.
\begin{itemize}
\item For every cone in $E\in D$, choose a relative interior point $\omega\in E$ and check if $\textup{in}_\omega(I)$ contains a monomial. If so, find a ``witness'' $f$ using the proof of \cite[Theorem~11]{ctv}, add it to $\F$, and restart the algorithm.
\end{itemize}
\item Output $\F$.
\end{enumerate}
Here $\textup{faces}(B)$ denotes the fan of all faces of $B$, including $B$ itself.
\end{algorithm}
\begin{proof}
The important property of the ``witness'' $f$ is that $T(\langle
f\rangle)$ does not intersect the relative interior of
$C_\omega(I)$. Therefore, adding $f$ to $\F$ excludes $C_\omega(I)$
from $A$ when the algorithm is restarted.  Termination of the
algorithm follows from the finiteness of the Gr\"obner fan.
\end{proof}


\begin{remark}
The fact that the variety refines the prevariety as in Proposition~\ref{prop:refinement} is not a coincidence. In fact, a Gr\"obner basis argument, which we omit, shows that
$\Sigma(J_{r}^{dn})$ refines $\bigwedge_{f\in\F_r^{dn}}\textup{NF}(\textup{New}(f))$ for any $d,n,r\in \N$.
\end{remark}

In the rest of this section, we explain how the 162 orbits of maximal cones in the prevariety got subdivided into 175 orbits when refining with the Gr\"obner fan.

Let $D$ denote the cone of the prevariety containing the vector
(\ref{eq:vec2}) in its relative interior. The cone $D$ is simplicial
of dimension $19$ and is generated by $10$ rays from the positive
version of the first orbit listed in (\ref{eq:vec1}). When refining,
$D$ gets subdivided into $10$ simplicial cones, spanned by (\ref{eq:vec2}) and each of the $\binom{10}{9}$ possible choices of $9$ remaining rays. The initial ideals of these $10$ cones are equal up to degree $5$. After saturating with $x_{11}\cdots x_{55}$ they all equal the saturation of the initial ideal of (\ref{eq:vec2}):

\vspace{0.1cm}
\begin{footnotesize}
\noindent
\center{$\langle
x_{22}x_{31}-x_{21}x_{32},
x_{13}x_{22}-x_{12}x_{23},
x_{13}x_{21}-x_{11}x_{23},
x_{12}x_{31}-x_{11}x_{32},
x_{12}x_{21}-x_{11}x_{22},$}
{\center $x_{22}x_{34}x_{45}x_{53}+x_{23}x_{32}x_{45}x_{54}+x_{22}x_{35}x_{43}x_{54}-x_{22}x_{33}x_{45}x_{54},$}
{\center $x_{21}x_{34}x_{45}x_{53}+x_{23}x_{31}x_{45}x_{54}+x_{21}x_{35}x_{43}x_{54}-x_{21}x_{33}x_{45}x_{54},$}
{\center $x_{12}x_{34}x_{45}x_{53}+x_{13}x_{32}x_{45}x_{54}+x_{12}x_{35}x_{43}x_{54}-x_{12}x_{33}x_{45}x_{54},$}
{\center $x_{11}x_{34}x_{45}x_{53}+x_{13}x_{31}x_{45}x_{54}+x_{11}x_{35}x_{43}x_{54}-x_{11}x_{33}x_{45}x_{54}\rangle$.}\\
\end{footnotesize}
\vspace{0.1cm}

There are six maximal cones
in the prevariety containing $D$. Denote them by $B_1,\dots,B_6$. They are all
simplicial with twelve generators of the first, positive type of
(\ref{eq:vec1}). The six cones get subdivided into ten cones
each. This accounts for a difference of $1800(60-6)$ in the
number of maximal cones. Counting orbits is more complicated. $B_1$
and $B_2$ belong to the same orbit and so do $B_5$ and $B_6$. Each of
these orbits splits into three new ones under refinement. The orbits of
$B_3$ and $B_4$ both get split in two orbits. This accounts for an
increase of $6$ in the number of orbits.

We now consider the
non-simplicial maximal cones of the prevariety. There is a total of $275400$
such cones in 16 orbits and each splits into two cones. This accounts for the
remaining difference in the number of maximal cones:
$$2894400-2521800=1800(60-6)+275400(2-1).$$ When a non-simplicial cone
splits into two simplicial cones, these can be in the same orbit or
in different orbits. In 7 of the 16 cases, they are in different
orbits. This accounts for the remaining increase of 7 in the number of
orbits after refinement.

\section{The $5\times n$ case}
\label{sec:5xn}
The goal of this section is to prove our main theorem, Theorem~\ref{thm:main}.
As mentioned in the introduction, we have two proof strategies.

We only briefly describe the first strategy here and refer to \cite{elenasproof} for the complete proof.
If $A$ is a matrix, we let $A_{\cdot,\hat{i}}$
be the matrix obtained from  $A$ by removing the $i$-th column $A^{(i)}$.
To prove Theorem~\ref{thm:main},
we  prove that, apart from a few cases which need special treatment,
 every $5 \times n$ matrix $A$ with tropical rank $3$
can be developed by one of its columns, that is, there exists
$i \in \{1,...,n\}$ such that  for any
lift $F$ of $A_{\cdot ,\hat{i}} $, we can find coefficients
in $K$ such that the linear combination of the columns of $F$
with these coefficients is a lift of the column $A^{(i)}$.
Obviously, this linear combination and $F$ give a lift $\tilde{A}$ of $A$ and if
we choose $F$ of rank at most $3$, then the lift $\tilde{A}$ has rank at most $3$.
Theorem~\ref{thm:main} follows by induction on $n$ with $5\times 4$ being the base case.

We now explain the idea of the second proof, which we present in this section. Given a $5\times n$ matrix of tropical rank at most 3, we will produce five tropical hyperplanes, each containing the $n$ column vectors of $A$. Then, we will argue that some pair of these hyperplanes must contain these $n$ points in their stable intersection and conclude that the Kapranov rank is at most 3. The central argument is an analysis of the possible combinatorial types of the point-hyperplane incidences. We begin with some definitions.
\begin{definition}
\label{def:type}
  Let $H = h_1 \odot x_1 \oplus \cdots \oplus h_d \odot x_d$ be a tropical hyperplane in $\TP^{d-1}$, and let $w = (w_1,\dots,w_d)$ be a point in $\TP^{d-1}$.
Then the \emph{type} of $w$ with respect to $H$, denoted $\type_Hw$, is the subset of $[d]$ of those indices at which the minimum of $h_1 \odot w_1, \dots, h_d \odot w_d$ is attained. Thus, $\left|\type_H w\right| \geq 2$ if and only if $w \in H$.
\end{definition}
Note that our definition of type is similar to the definition by Ardila and Develin in \cite{ardiladevelin} which gives rise to tropical oriented matroids. The only difference is that our types are taken with respect to a single hyperplane instead of a hyperplane arrangement.

Types have a natural geometric interpretation, as follows. A tropical
hyperplane $H$ divides $\TP^{d-1}$ into $d$ \emph{sectors}: the $i$th
closed sector consists of those points $w$ for which the minimum
when $H$ is evaluated at $w$ is attained at coordinate $i$. The type
of a point records precisely in which closed sectors it lies.

Recall that a \emph{tropicalized linear space} is the tropicalization of a classical linear variety in $K[x_1,\dots,x_d]$. If the linear space is a classical hyperplane, then we just call its tropicalization a \emph{tropical hyperplane} as in Definition~\ref{def:type}. The \emph{stable intersection} of two tropical linear spaces $L$ and $L'$ is
$$\textup{lim}_{v\rightarrow 0} L\cap(L'+v)$$
and is itself a tropicalized linear space, see \cite[Proposition~4.4.1, Theorem~4.4.6]{speyerthesis} or \cite[Proposition~3.1, Theorem~3.6]{speyertropicallinearspaces}. To clarify, a point $w$ lies in the stable intersection of $L$ and $L'$ if and only if for all $\varepsilon > 0$, there exists $\delta > 0$ such that for each $v \in \R^n$ with $\|v\|_\infty < \delta$, there exists $\tilde{w} \in L \cap (L' + v)$ with $\|\tilde{w} - w\|_\infty < \varepsilon$. (We use the $L^\infty$ norm in our definition for ease of exposition; it is equivalent to using the $L^2$ norm by a standard argument in analysis.)

\begin{proposition}
Let $H$, $H'$ be hyperplanes in $\TP^{d-1}$, and let $w \in \TP^{d-1}$ be a point lying on both $H$ and $H'$. Then $w$ does not lie in their stable intersection precisely when $\type_Hw=\type_{H'}w$ and they are a set of size two.
\end{proposition}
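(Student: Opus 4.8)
The approach is to localize at $w$. Write $\mathrm{star}_w(X)$ for the local cone of a polyhedral complex $X$ at $x\in X$, i.e.\ the fan of directions $u$ with $x+\varepsilon u\in X$ for all sufficiently small $\varepsilon>0$, and write $\cap_{\mathrm{st}}$ for stable intersection. The first step is the standard fact that stable intersection is local: $w\in H\cap_{\mathrm{st}}H'$ if and only if $0\in\mathrm{star}_w(H)\cap_{\mathrm{st}}\mathrm{star}_w(H')$. This follows directly from the $\varepsilon$--$\delta$ description above, using that $H$ coincides with $w+\mathrm{star}_w(H)$ on a small ball around $w$, and likewise for $H'$. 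The second step identifies these stars. If $S=\type_H w$ — so $|S|\ge 2$ since $w\in H$ — then for $x$ near $w$ the minimum of $h_1+x_1,\dots,h_d+x_d$ is attained only at indices $i\in S$, and a short computation gives
$$\mathrm{star}_w(H)=L_S:=\{u\in\TP^{d-1}\ :\ \min_{i\in S}u_i\ \text{is attained at least twice}\}.$$
Note that $L_{\{i,j\}}$ is the \emph{classical} linear hyperplane $\{u_i=u_j\}$, whereas $L_S$ with $|S|\ge 3$ is a genuinely tropical, non-linear hyperplane. So the proposition becomes: writing $S'=\type_{H'}w$, we have $0\notin L_S\cap_{\mathrm{st}}L_{S'}$ if and only if $S=S'$ and $|S|=2$.

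The ``if'' direction is short: if $S=S'=\{i,j\}$ then $L_S=L_{S'}=P$ is a single classical linear hyperplane through the origin, and for every $v\notin P$ — of which there are vectors of arbitrarily small norm — the translate $P+v$ is parallel to $P$ and disjoint from it, so $L_S\cap(L_{S'}+v)=\emptyset$ and the $\varepsilon$--$\delta$ condition fails at $0$. For the converse I would prove the stronger claim that, \emph{outside} the exceptional case, there is a constant $C$ (in fact $C=2$) such that for \emph{every} $v$ there exists $u\in L_S\cap(L_{S'}+v)$ with $\|u\|_\infty\le C\|v\|_\infty$; taking $\delta=\varepsilon/C$ then certifies $0\in L_S\cap_{\mathrm{st}}L_{S'}$. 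Concretely, given $v$ I must produce a small $u$ for which $\min_{i\in S}u_i$ and $\min_{j\in S'}(u_j-v_j)$ are each attained at least twice. The negation of ``$S=S'$ and $|S|=2$'' breaks into three mutually exclusive cases, in each of which I would simply write down $u$: (i) if $S\cap S'=\emptyset$, take $u_i=0$ for $i\in S$, $u_j=v_j$ for $j\in S'$, and $u_k=0$ otherwise; (ii) if $S\cap S'\neq\emptyset$ and $S\neq S'$, then (using symmetry of stable intersection) assuming $S\setminus S'\neq\emptyset$, pick $p\in S\setminus S'$ and $a\in S\cap S'$ with $v_a=\min\{v_i:i\in S\cap S'\}$, and set $u_a=u_p=0$, $u_i=\max(0,v_i-v_a)$ for $i\in S\setminus\{a,p\}$, $u_j=v_j-v_a$ for $j\in S'\setminus S$, $u_k=0$ otherwise — so that $\min_{i\in S}u_i=0$ is attained at $\{a,p\}$ while $u_i-v_i=-v_a$ for \emph{all} $i\in S'$; (iii) if $S=S'$ with $|S|\ge 3$, let $a,b\in S$ index the two smallest entries of $v$ among $S$, and set $u_a=u_b=0$, $u_i=\max(0,v_i-v_b)$ for $i\in S\setminus\{a,b\}$, $u_k=0$ otherwise — so that $\min_{i\in S}u_i=0$ is attained at $\{a,b\}$, while $u_i-v_i=-v_b$ for every $i\in S\setminus\{a\}$, a set of size at least $2$, and $u_a-v_a\ge-v_b$. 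In each case one checks $u\in L_S\cap(L_{S'}+v)$ and $\|u\|_\infty\le 2\|v\|_\infty$.

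The real work — not deep, but needing care — is this last step: verifying that (i)--(iii) exhaust the non-exceptional situation and that each constructed $u$ genuinely lies in both $L_S$ and $L_{S'}+v$. The heart of the matter is that the two ``minimum attained twice'' requirements pull the relevant coordinates in opposite directions, so that realizing them simultaneously with $u$ of size $O(\|v\|_\infty)$ is possible exactly when one is not in the exceptional case; the choices ``$v_a$ minimal on $S\cap S'$'' in (ii) and ``$a,b$ the two smallest'' in (iii) are precisely what makes this work, and this is where the hypothesis $|S|\ge 3$ (or $S\neq S'$) enters. An alternative I would keep in reserve, rather than as the main argument, is to identify $L_S$ with the tropical linear space of the matroid on $[d]$ whose unique circuit is $S$ and to invoke the fact that the stable intersection of realizable tropical linear spaces is again the tropical linear space of a matroid (the ``matroid intersection''); the proposition then reduces to the purely combinatorial statement that this matroid has a loop precisely when $S=S'$ is a two-element set.
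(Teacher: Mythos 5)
Your proposal is correct, and it reorganizes the argument in a genuinely different (and arguably cleaner) way than the paper. The paper works directly at $w$: for the non-exceptional case it takes an arbitrary small translation $v$, looks at the coordinate $i$ where $H'+v$ attains its minimum at $w$, and in two cases constructs $\tilde{w}$ by adding a small $t$ (bounded by $2\delta$) to one or two coordinates of $w$; for the exceptional case it observes that near $w$ both hyperplanes coincide with a common affine span $P$ and translates along $e_1-e_2$ to separate them. You instead localize first, replacing $H$ and $H'$ by their stars at $w$, which you correctly identify as the fans $L_S$, $L_{S'}$ with $S=\type_H w$, $S'=\type_{H'}w$, and then prove the sharper statement that outside the exceptional case one can always find $u\in L_S\cap(L_{S'}+v)$ with $\|u\|_\infty\le 2\|v\|_\infty$, via explicit formulas in the three cases $S\cap S'=\emptyset$, $S\neq S'$ with $S\cap S'\neq\emptyset$, and $S=S'$ with $|S|\ge 3$; I checked these formulas and they do land in both fans with the claimed bound (in case (ii) every coordinate of $u-v$ on $S'$ equals $-v_a$; in case (iii) the minimum of $u-v$ on $S$ is $-v_b$, attained on $S\setminus\{a\}$, which is where $|S|\ge 3$ enters). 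Your exceptional-case argument (locally a single classical hyperplane $\{u_i=u_j\}$, translated off itself) is essentially the paper's converse argument in localized form. What your route buys is a uniform constant, a symmetric case division governed purely by the combinatorics of $S$ and $S'$, and constructions valid for all (not just generic) $v$; what it costs is the localization lemma ($w$ lies in the stable intersection iff $0$ lies in the stable intersection of the stars), which you assert rather than prove — it is indeed routine (shrink $\delta$ and $\varepsilon$ below the radius on which $H$, $H'$, and their translates agree with the translated stars), but in a full write-up you should include those two or three lines, since both directions of the proposition pass through it.
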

\begin{proof}

Given $w$, $H$, and $H'$, let $\Delta$ be the difference between the minimum and the second smallest number when $H$ is evaluated at $w$, or $\infty$ if only one value occurs. Define $\Delta'$ with respect to $H'$ similarly. Also, write
$$  H = a_1 \odot x_1 \oplus \cdots \oplus a_d \odot x_d,\\
  H' = b_1 \odot x_1 \oplus \cdots \oplus b_d \odot x_d,$$
for real numbers $a_i$, $b_i$.

Suppose $\type_H w \neq \type_{H'} w$ or $|\type_H w| \geq 3$ or $|\type_{H'} w| \geq 3$. Permuting if necessary, we may assume that $1, 2 \in \type_H w$ and $3 \in \type_{H'} w$. Now, given $\varepsilon > 0$, let $\delta = \frac{1}{2} \min\{\varepsilon,\Delta,\Delta'\}$. Let $v \in \R^n$ satisfy $\|v\|_\infty < \delta$. We wish to find a point $\tilde{w} \in H \cap (H' + v)$ such that $\|\tilde{w} - w \|_\infty < \varepsilon$.

If $w \in H'+v$ then we may choose $\tilde{w} = w$ and we are done, so assume instead that the minimum when $H' + v$ is evaluated at $w$ is achieved uniquely, say at coordinate $i$. Furthermore, since $\|v\|_\infty < \frac{1}{2}\Delta'$, the fact that $i \in \type_{H'+v} w$ implies $i \in \type_{H'} w$. We have two cases.

\emph{Case 1.} $i \in \{1,2\}$ and $\type_H w = \{1,2\}$. Let
\[ t = \min_{3 \leq j \leq d} (b_j - v_j + w_j) - (b_i-v_i+w_i), \] and let
$\tilde{w} = (w_1+t,w_2+t,w_3,\dots,w_d)$. Now, we claim that $t < 2\delta$:
\begin{align*}
  t & \leq (b_3 - v_3 + w_3) - (b_i - v_i + w_i) \\
  & \leq |b_3 - v_3 + w_3 - (b_3 + w_3)| 
     + |(b_3 + w_3) - (b_i + w_i)|\\
  & \quad  + |(b_i + w_i) - (b_i - v_i + w_i)| \\
  & \leq |v_3| + |v_i| < 2\delta,
\end{align*}
where the fact that $b_3 + w_3 = b_i + w_i$ follows from the fact that $\{i,3\} \subseteq \type_{H'} w$. Thus, $\|\tilde{w}-w\|_\infty = t < 2\delta \leq \varepsilon$. Also, $\tilde{w}$ lies on $H$ since $\type_H w = \{1,2\}$ and $t < 2\delta \leq \Delta$, and $\tilde{w}$ lies on $H'+v$ by construction.

\emph{Case 2.} $i \notin \{1, 2\}$ or $\type_H w$ strictly contains $\{1,2\}$. In either situation, $\type_H w$ has at least 2 elements different from $i$. Now, let 
\[ t = \min_{j \in [d]\setminus \{i\}} (b_j - v_j + w_j) - (b_i-v_i+w_i), \] and let $\tilde{w} = (w_1,\dots,w_i+t,\dots,w_d)$. Now pick some $k \in \type_{H'} w \setminus \{i\}$; this is possible since $|\type_{H'} w | \geq 2$. Then
\begin{align*}
t &\leq b_k - v_k + w_k - (b_i - v_i + w_i)\\
 & \leq |b_k - v_k + w_k - (b_k + w_k)|  + |(b_k + w_k) - (b_i+w_i)| \\
& \quad + |(b_i+w_i)-(b_i-v_i+w_i)| \\
& \leq |v_k|+|v_i| < 2\delta,
\end{align*}
where $i, k \in \type_{H'} w$ implies that $b_k + w_k = b_i + w_i$. So $\|\tilde{w}-w\|_\infty = t < 2\delta \leq \varepsilon$. Also, $\tilde{w}$ lies on $H$ since $\type_H w$ has at least two elements different from $i$, and $\tilde{w}$ lies on $H' + v$ by construction, as desired.

For the converse, suppose that $\type_H w = \type_{H'} w$ is a set of size two; we may assume it is $\{1,2\}$.
Let $P$ be the affine linear span of the face in $H$ containing $w$. This equals the affine linear span of the face in $H'$ containing $w$ since $\textup{type}_Hw=\textup{type}_{H'}w$. Since the faces of $H$ (and $H'$) are closed, and $|\textup{type}_Hw|=2$ (and $|\textup{type}_{H'}w|=2$) implies that $w$ is contained in just one face of $H$ (and $H'$), there exists $\varepsilon>0$ such that
$$H\cap B(w,2\varepsilon)=P\cap B(w,2\varepsilon)=H'\cap B(w,2\varepsilon),$$
where $B(w,\varepsilon)$ is the $\varepsilon$-ball centered at $w$. For any $\delta>0$, pick $v=\textup{min}(\delta/2,\varepsilon)(e_1-e_2)$. Now
$$B(w,\varepsilon)\cap(H'+v)=((B(w,\varepsilon)-v)\cap H')+v\subseteq$$
$$ (B(w,2\varepsilon)\cap H')+v=
(B(w,2\varepsilon)\cap P)+v\subseteq P+v$$
which shows that $B(w,\varepsilon)\cap(H'+v)\cap H\subseteq (P+v) \cap P= \emptyset$.
\end{proof}
\begin{proposition}
\label{prop:stablelift}
Let $H,H'$ be tropical hyperplanes in $\TP^{d-1}$. Then there exists a codimension $2$ linear space $L$ over $K$ whose tropicalization is the stable intersection of $H$ and $H'$.
\end{proposition}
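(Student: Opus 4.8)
The plan is to realize the stable intersection of $H$ and $H'$ as an honest tropical linear space, and then invoke the fact that every tropical linear space lifts to a genuine linear space over $K$. Concretely, the first step is to choose classical linear forms $\ell, \ell' \in K[x_1,\dots,x_d]$ whose tropicalizations are $H$ and $H'$ respectively; if $H = a_1 \odot x_1 \oplus \cdots \oplus a_d \odot x_d$, take $\ell = \sum_i c_i x_i$ with $\mathrm{val}(c_i) = a_i$, and similarly for $\ell'$. The candidate space is $L := V(\ell) \cap V(\ell')$, which is a classical linear space of codimension $\leq 2$ in $K^d$. Since $\ell$ and $\ell'$ are linearly independent over $K$ (their tropicalizations $H \neq H'$ as hyperplanes, or if they happen to be equal as tropical hyperplanes one perturbs the coefficients within their valuation classes to force independence without changing $H$ or $H'$), $L$ has codimension exactly $2$. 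It then remains to show that $T(\langle \ell, \ell'\rangle)$ equals the stable intersection $H \cap_{\mathrm{st}} H'$.

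The key tool for this last identification is the theory of stable intersection recalled just before the proposition: the stable intersection of two tropical linear spaces is itself a tropical linear space (\cite[Proposition~4.4.1, Theorem~4.4.6]{speyerthesis}, or \cite[Proposition~3.1, Theorem~3.6]{speyertropicallinearspaces}), and more precisely, for two classical linear subspaces $V, V' \subseteq K^d$ in sufficiently general position relative to one another, one has $\mathrm{trop}(V \cap V') = \mathrm{trop}(V) \cap_{\mathrm{st}} \mathrm{trop}(V')$. The subtlety is that $V(\ell)$ and $V(\ell')$ need not be in general position; but here the defining data are hyperplanes, and two distinct hyperplanes in $K^d$ always meet in codimension $2$, which is the generic codimension. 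I would argue that for hyperplanes the tropicalization of the intersection always coincides with the stable intersection of the tropicalizations: this follows because $\mathrm{trop}(V(\ell) \cap V(\ell'))$ is a pure balanced polyhedral complex of the correct dimension $d-2$ contained in both $H$ and $H'$, hence contained in $H \cap_{\mathrm{st}} H'$, which is also pure of dimension $d-2$; balancing (each carries weight one, since $\ell,\ell'$ are linear) then forces equality on each maximal cell, and one checks there are no lower-dimensional stray components by a local computation at a generic point of a ridge, exactly as in the proof of the preceding proposition where the two cases of \textbf{Case 1} and \textbf{Case 2} describe precisely how the intersection thickens to full dimension.

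Alternatively — and this may be the cleaner route — I would avoid the general-position discussion entirely and argue pointwise using the previous proposition: a point $w$ lies in $H \cap_{\mathrm{st}} H'$ iff $w \in H$, $w \in H'$, and it is \emph{not} the case that $\type_H w = \type_{H'} w$ is a $2$-element set. Combining this with the Fundamental Theorem of Tropical Geometry, I would show $w \in T(\langle \ell, \ell'\rangle)$ iff $\mathrm{in}_w(\ell)$ and $\mathrm{in}_w(\ell')$ have no common monomial-generating obstruction, i.e. iff $\mathrm{in}_w(\ell), \mathrm{in}_w(\ell')$ are not proportional monomials and span an ideal without monomials — which translates exactly into the type condition above, since $\mathrm{in}_w(\ell)$ is a monomial precisely when $|\type_H w| = 1$, and $\mathrm{in}_w(\ell), \mathrm{in}_w(\ell')$ are proportional monomials precisely when $\type_H w = \type_{H'} w$ has size one, whereas size-two equal types give $\langle \mathrm{in}_w(\ell),\mathrm{in}_w(\ell')\rangle$ containing a variable. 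The main obstacle I anticipate is handling exactly this last bookkeeping: verifying that when $\type_H w = \type_{H'} w = \{i,j\}$ the initial ideal $\mathrm{in}_w(\langle \ell,\ell'\rangle)$ does contain a monomial (so $w \notin T$), which requires knowing that $\mathrm{in}_w(\ell)$ and $\mathrm{in}_w(\ell')$ are binomials supported on the \emph{same} pair $\{x_i, x_j\}$ and are not proportional (the non-proportionality being where one must use that $H$ and $H'$ are genuinely different hyperplanes, or else fall back on the $L$-perturbation argument from the first step). Once that is pinned down, the Fundamental Theorem gives a lift of rank-type data and the proposition follows.
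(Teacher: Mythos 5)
There is a genuine gap, and it sits exactly where the whole content of the proposition lies: you work with an essentially arbitrary lift $\ell,\ell'$ of $H,H'$ (adjusted only to be linearly independent over $K$), but the tropicalization of $V(\ell)\cap V(\ell')$ depends on the choice of lift, and for non-generic lifts it is \emph{not} the stable intersection --- it need not even be contained in it, and it can be empty. Take $d=3$, $\ell=x_1+x_2+x_3$ and $\ell'=x_1+x_2+t\,x_3$: these are linearly independent lifts of the tropical hyperplanes $H$, $H'$ with coefficient vectors $(0,0,0)$ and $(0,0,1)$, whose stable intersection is the single point $(0,0,0)\in\TP^2$; yet $\langle\ell,\ell'\rangle$ contains $(1-t)x_3$, so $V(\ell)\cap V(\ell')$ misses the torus and its tropicalization is empty. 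Replacing $\ell'$ by the lift $\ell''=x_1+(1+t^2)x_2+t\,x_3$ of the same $H'$, the intersection is the line spanned by $\bigl(-(1+\tfrac{t^2}{1-t}),\,1,\,\tfrac{t^2}{1-t}\bigr)$, whose tropicalization is the single point $(0,0,2)$: a pure, balanced, weight-one complex of the expected dimension lying in $H\cap H'$ but \emph{not} in the stable intersection (its type with respect to both hyperplanes is $\{1,2\}$, so the proposition preceding this one excludes it). This breaks the step ``contained in both $H$ and $H'$, hence contained in the stable intersection'' --- the stable intersection is in general a proper subset of $H\cap H'$ --- and with it the purity/balancing argument; the claim that ``for hyperplanes the tropicalization of the intersection always coincides with the stable intersection'' is simply false.

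The same example defeats the proposed ``cleaner'' pointwise route: at $w=(0,0,2)$ one has $\mathrm{in}_w(\ell)=\mathrm{in}_w(\ell'')=x_1+x_2$, while $\mathrm{in}_w\langle\ell,\ell''\rangle$ also contains $\mathrm{in}_w(\ell-\ell'')=x_3-x_2$ and is monomial-free, so $w\in T(\langle\ell,\ell''\rangle)$ even though $w$ is not in the stable intersection. In general $\mathrm{in}_w\langle\ell,\ell'\rangle$ is strictly larger than the ideal generated by $\mathrm{in}_w(\ell)$ and $\mathrm{in}_w(\ell')$ (two linear forms need not be a tropical basis of the pencil they generate), so no bookkeeping on the two initial forms alone can decide membership in $T(\langle\ell,\ell'\rangle)$; and the non-proportionality you flag cannot be rescued merely by $H\neq H'$ or by linear independence, as the example shows. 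What the statement actually requires is a \emph{generic} lift, chosen so that all such coincidences are avoided simultaneously, and the fact that a generic lift of the two hyperplanes tropicalizes its intersection to exactly the stable intersection is precisely \cite[Proposition~4.5.3]{speyerthesis} --- the single result invoked in the paper's one-line proof. The results you cite (\cite[Proposition~4.4.1, Theorem~4.4.6]{speyerthesis}) only say that the stable intersection is a tropical linear space, not that it is realizable as the tropicalization of a linear space over $K$; that realizability-via-genericity statement is the missing ingredient, which your proposal neither proves nor cites.
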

\begin{proof}
By \cite[Proposition~4.5.3]{speyerthesis}, we may lift $H$ and $H'$ generically to classical hyperplanes $\HH$ and $\HH'$ over $K$ such that the tropicalization of $\HH\cap \HH'$ is the stable intersection of $H$ and $H'$.
\end{proof}
Now let $W$ be a set of points in $\TP^{d-1}$, and let $i\in\{1,\dots,d\}$. We say that a hyperplane $H$ that contains each point in $W$ is an \emph{$i$-coordinate hyperplane}
for $W$ if $\type_Hw$ does not contain $i$ for any $w\in W$. That is, no point in $W$ lies in the $i$th closed sector of $H$.

Next, suppose $w$ is a point contained in two hyperplanes $H$ and $H'$ but not in their stable intersection. Then, by Proposition~\ref{prop:stablelift}, $w$ has type $\{a,b\}$ with respect to both $H$ and $H'$, for some $a$ and $b$. Then we say that $w$ is a \emph{witness of type $ab$}  to the nonstable intersection of $H$ and $H'$. In the case that $H$ and $H'$ are $k$- and $l$-coordinate hyperplanes,
  respectively, we note that the sets $\{a,b\}$ and $\{k,l\}$ must be
  disjoint.

We are now ready to state a proposition which will serve as the combinatorial heart of our proof of Theorem~\ref{thm:main}.

\begin{proposition}
\label{prop:3}
  Let $W=\{w_1, \dots, w_n\}$ be a subset of points in $\TP^4$, and for each $i$ with $1 \leq i \leq 5$, let $H_i$ be a hyperplane containing each point in $W$ such that $H_i$ is an $i$-coordinate hyperplane for $W$. Suppose further that for every pair of hyperplanes $H_i$ and $H_j$, the intersection $H_i\cap H_j$ is not the stable intersection of $H_i$ and $H_j$ and some $w_s\in W$ witnesses this nonstable intersection.

Let $i,j,k,l,m$ be distinct elements in $\{1,2,3,4,5\}$. Suppose $H_i$ and $H_j$ have a witness in $W$ of type $kl$. Then any witness in $W$ for $H_i$ and $H_k$ has type $lm$.
\end{proposition}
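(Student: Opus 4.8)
The plan is to argue by contradiction, tracking which coordinates are "used up" by witness types. Recall the standing constraints: $H_i$ is an $i$-coordinate hyperplane for $W$, so no point of $W$ has $i$ in its type with respect to $H_i$; and if $w_s$ witnesses the nonstable intersection of $H_a$ and $H_b$, its common type $\{x,y\}$ with respect to both must avoid both $a$ and $b$. Suppose $w_s \in W$ witnesses $H_i, H_j$ with $\type_{H_i} w_s = \type_{H_j} w_s = \{k,l\}$ (forced to be disjoint from $\{i,j\}$, consistent with the hypothesis). Let $w_t \in W$ be a witness for $H_i$ and $H_k$; then $\type_{H_i} w_t = \type_{H_k} w_t = \{a,b\}$ for some two-element set disjoint from $\{i,k\}$. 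We must show $\{a,b\} = \{l,m\}$, i.e. exclude the remaining possibilities $\{a,b\} \subseteq \{j,l,m\}$, namely $\{j,l\}$, $\{j,m\}$, and $\{l,m\}$ — so we need to rule out $\{j,l\}$ and $\{j,m\}$.

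The key step is to derive a contradiction from $j \in \{a,b\}$ by comparing the behaviour of the hyperplane $H_i$ at the two points $w_s$ and $w_t$, and then invoking convexity of the face of $H_i$ they both lie in (or rather, using the hyperplane $H_j$). Concretely: $w_s$ lies in the $\{k,l\}$-sector-boundary of $H_i$ and also of $H_j$; $w_t$ lies in the $\{a,b\}$-boundary of $H_i$ with $j \in \{a,b\}$, say $\{a,b\} = \{j,l\}$ (the case $\{j,m\}$ is symmetric, swapping the roles of $l$ and $m$ where they are interchangeable). Then $j \in \type_{H_i} w_t$. But $w_t$ also lies on $H_j$, and the crucial point is to look at $\type_{H_j} w_t$: since $H_j$ is a $j$-coordinate hyperplane, $j \notin \type_{H_j} w_t$. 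I would then use the linear-algebra relationship between $H_i$ and $H_j$ along the face containing $w_s$ — both hyperplanes agree on the affine span of that face, as exploited in the converse direction of the first Proposition — to transfer type information from $H_i$ to $H_j$ at nearby points, and get a contradiction with $w_t$ witnessing the $H_i$–$H_k$ nonstable intersection. An alternative, and probably cleaner, route is purely combinatorial: write out the coordinatewise equalities forced by the three incidences ($w_s$ on $H_i$ and $H_j$ with type $\{k,l\}$; $w_t$ on $H_i$, $H_j$, $H_k$) as a small system of equations among the $a^{(\cdot)}_r + (w_\cdot)_r$, and show that the putative type $\{j,l\}$ for the $H_i$–$H_k$ witness forces the minimum of $H_i$ at some point to be attained at a coordinate that is excluded by the $i$-coordinate or $k$-coordinate property.

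I expect the main obstacle to be bookkeeping: there are five indices, three hyperplanes, two witness points, and a web of "type avoids this index" constraints, and the argument has to extract a genuine contradiction rather than merely a consistent-looking configuration. In particular one must use the full force of the hypothesis that $H_i$ and $H_k$ themselves have a witness (not just that they intersect), and that every pair among the five hyperplanes has a witness — it is conceivable the cleanest proof chains together constraints from a third hyperplane, say $H_m$, whose witness with $H_i$ must also avoid $i,m$. So the real work is identifying the minimal sub-collection of incidence constraints whose simultaneous satisfaction is impossible when $j \in \{a,b\}$, and then verifying that impossibility by a direct inspection of which coordinate realizes each minimum. Once the contradiction is isolated for the case $\{a,b\}=\{j,l\}$, the case $\{a,b\}=\{j,m\}$ should follow by the evident symmetry in the roles of $l$ and $m$ (both being elements of $\{1,\dots,5\}\setminus\{i,j,k\}$ on equal footing with respect to the data given so far), so no separate argument is needed there.
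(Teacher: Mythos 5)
Your reduction is the same as the paper's: a witness for $H_i$ and $H_k$ has type disjoint from $\{i,k\}$, hence of type $jl$, $jm$, or $lm$, and one must exclude $jl$ and $jm$. But the step you lean on to halve the work is wrong: the cases $\{a,b\}=\{j,l\}$ and $\{a,b\}=\{j,m\}$ are \emph{not} related by swapping $l$ and $m$, because the standing hypothesis --- that the $H_i$--$H_j$ witness has type $\{k,l\}$ --- already breaks that symmetry; $l$ occurs in the given data and $m$ does not, so they are not on equal footing. Accordingly, the paper needs two genuinely different statements: type $jm$ is excluded by showing that any $H_i$--$H_k$ witness must have type \emph{containing} $l$ (Lemma~\ref{lem:1}), whereas type $jl$ is excluded by a separate incompatibility (Lemma~\ref{lem:2}), and the two proofs are distinct coordinate arguments, not mirror images of one another.

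Beyond that, neither exclusion is actually carried out: the proposal stops at the point where the real work begins ("transfer type information from $H_i$ to $H_j$", or "write out a small system and show the minimum is forced at an excluded coordinate" are descriptions of a strategy, not arguments). In the paper, both exclusions essentially require the hypothesis that the third pair $H_j$, $H_k$ also has a witness, followed by a case analysis over the three possible types of that witness, with explicit normalizations (setting $h_{s1}=0$, rescaling the witness points) and inequalities such as $b>a$ and $d>a$ extracted from the sector conditions, each subcase ending in a concrete contradiction. You correctly suspect that constraints from a third pair may be needed, but the proposal never identifies the contradiction in any subcase, and the false $l\leftrightarrow m$ symmetry would, if taken at face value, hide the fact that one of the two needed lemmas is missing entirely. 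As written, only the easy enumeration of possible witness types is established; the substantive content of the proposition remains unproved.
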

Proposition~\ref{prop:3} follows from the following two lemmas, whose proofs we postpone to the end of the section.
\begin{lemma}
\label{lem:1}
  Let $W=\{w_1, \dots, w_n\}$, $H_1, \dots, H_5$ be as in the first paragraph of Proposition~\ref{prop:3}.  Let $i,j,k,l$ be distinct elements in $\{1,\dots,5\}$, and assume without loss of generality that $H_i$ and $H_j$ have a witness in $W$ of type $kl$. Then any witness in $W$ for $H_i$ and $H_k$ must be of type containing $l$.
\end{lemma}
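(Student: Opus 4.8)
The plan is to argue by contradiction. Let $m$ denote the fifth index, so that $\{i,j,k,l,m\}=\{1,2,3,4,5\}$. Since $H_i$ is an $i$-coordinate hyperplane and $H_k$ a $k$-coordinate hyperplane, any witness in $W$ for the pair $H_i,H_k$ has type a two-element subset of $\{j,l,m\}$, and the only such subset not containing $l$ is $\{j,m\}$. Thus it suffices to show that there is no $w'\in W$ that witnesses the nonstable intersection of $H_i$ and $H_k$ with $\type_{H_i}w'=\type_{H_k}w'=\{j,m\}$. Suppose such a $w'$ exists, and let $w\in W$ be the given witness for $H_i,H_j$, so that $\type_{H_i}w=\type_{H_j}w=\{k,l\}$.

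Next I would translate these type conditions into linear (in)equalities on the coefficients of the hyperplanes and the coordinates of $w,w'$. Writing $H_p=\bigoplus_q (H_p)_q\odot x_q$, the condition $\type_{H_i}w=\type_{H_j}w=\{k,l\}$ forces the ``matching'' equality $(H_i)_k-(H_i)_l=(H_j)_k-(H_j)_l\,(=w_l-w_k)$ together with the strict inequalities $(H_i)_q+w_q>(H_i)_k+w_k$ and $(H_j)_q+w_q>(H_j)_k+w_k$ for $q\in\{i,j,m\}$; likewise $\type_{H_i}w'=\type_{H_k}w'=\{j,m\}$ forces $(H_i)_j-(H_i)_m=(H_k)_j-(H_k)_m$ plus strict inequalities at the coordinates $i,k,l$. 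I would also record that $H_j$ contains $w'$ and is $j$-coordinate (so the $H_j$-minimum at $w'$ is attained at least twice, on a subset of $\{i,k,l,m\}$), that $H_k$ contains $w$ and is $k$-coordinate, and that $H_i$ is $i$-coordinate at both $w$ and $w'$. One checks that the conditions imposed on $w$ and $w'$ in isolation are consistent, so the rest of the argument must use witnesses for further pairs.

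I would then bring in the witness for the pair $H_j,H_k$ supplied by the hypothesis (and, if needed, the witnesses for $H_i,H_l$ and $H_i,H_m$). Such a witness $v$ has type a two-element subset of $\{i,l,m\}$, which again yields a matching equality of the form $(H_j)_p-(H_j)_q=(H_k)_p-(H_k)_q$ for $p,q$ in that type, together with more strict inequalities. After normalizing $H_i,H_j,H_k$, the matching equalities pin several of their coefficients down exactly in terms of coordinates of $w,w',v$; substituting these back into the strict inequalities should force a chain of strict inequalities that closes into a cycle, yielding a contradiction of the form $t<t$. I would carry this out in the three cases for the type of $v$, running the same inequality chase in each.

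The step I expect to be the main obstacle is precisely this final infeasibility argument: organizing the bookkeeping of the strict inequalities, and identifying exactly which auxiliary witnesses are needed (the two-point data $w,w'$ genuinely does not suffice), so that in each case the cycle closes with a contradiction. Degenerate coincidences, in which a single point of $W$ witnesses several pairs at once, are not a real difficulty, since they only add constraints, but they should be acknowledged in the write-up.
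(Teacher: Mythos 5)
Your plan follows the same route as the paper's proof: reduce to the single forbidden type $\{j,m\}$ for an $H_i,H_k$ witness (correct, since types must avoid $i$ and $k$), normalize the hyperplane coefficients, record the matching equalities and strict inequalities coming from the two given witnesses, and then play these off against the witness for the pair $H_j,H_k$, whose type must be one of $\{i,l\}$, $\{i,m\}$, $\{l,m\}$. You also correctly note that the two-witness data alone is consistent, so an extra ingredient is needed, and the $H_j,H_k$ witness is indeed the one the paper uses (the witnesses for $H_i,H_l$ and $H_i,H_m$ that you keep in reserve are not needed).

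The gap is that the crux --- showing that each of the three cases is infeasible --- is only asserted (``should force a chain of strict inequalities that closes into a cycle'') and deferred, and it does not follow mechanically from the matching equalities. Writing $h_{pq}$ for the coefficient of $x_q$ in $H_p$ and normalizing as in the paper ($H_i\equiv 0$, $h_{ji}=h_{ki}=0$, $h_{jk}=h_{jl}=a$, $h_{kj}=h_{km}=c$, $h_{jm}=b$, $h_{kl}=d$), one must first extract two auxiliary strict inequalities, $b>a$ and $d>a$: the first from the fact that the type-$\{j,m\}$ witness $w'$ also lies on the $j$-coordinate hyperplane $H_j$, the second from the fact that the type-$\{k,l\}$ witness $w$ lies on the $k$-coordinate hyperplane $H_k$ --- and this second deduction already requires the normalization $a\le c$, justified by the symmetry swapping $j\leftrightarrow k$, $l\leftrightarrow m$, which your plan does not mention. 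Only with $b>a$ and $d>a$ in hand do the three cases die: a type-$\{i,l\}$ witness forces $a=d$ outright (since $h_{ji}=h_{ki}$), while in the $\{l,m\}$ and $\{i,m\}$ cases the recorded inequalities make the minimum of $H_i$ evaluated at that witness be attained uniquely at coordinate $m$, contradicting that every point of $W$ lies on the $i$-coordinate hyperplane $H_i$. Since this infeasibility analysis is the entire content of the lemma and your proposal explicitly stops short of it, the argument is incomplete as written, even though the strategy is the right one.
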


\begin{lemma}
\label{lem:2}
  Let $W=\{w_1, \dots, w_n\}$, $H_1, \dots, H_5$ be as in the first paragraph of Proposition~\ref{prop:3}. Let $i,j,k,l$ be distinct elements in $\{1, \dots, 5\}$. Then it is not possible that $H_i$ and $H_k$ have a witness in $W$ of type $jl$ and that $H_i$ and $H_j$ have a witness in $W$ of type $kl$.
\end{lemma}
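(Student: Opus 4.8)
\emph{Proof strategy for Lemma~\ref{lem:2}.}
The plan is to argue by contradiction. Suppose such witnesses exist; writing $m$ for the remaining index and permuting the five coordinates, we may assume $(i,j,k,l,m)=(1,2,3,4,5)$, so that $W$ contains a point $w$ with $\type_{H_1}w=\type_{H_3}w=\{2,4\}$ and a point $w'$ with $\type_{H_1}w'=\type_{H_2}w'=\{3,4\}$; this is the configuration to be ruled out. Since scaling the $p$-th coordinate of $\TP^4$ by the $p$-th coefficient of $H_1$ is a tropical automorphism carrying $H_1$ to $x_1\oplus\cdots\oplus x_5$, we may further assume $H_1=x_1\oplus\cdots\oplus x_5$, so that $\type_{H_1}z$ is the set of coordinates at which $z$ attains its minimum.

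First I would extract the linear data. Writing $(a_1,\dots,a_5)$ and $(b_1,\dots,b_5)$ for the coefficients of $H_2$ and $H_3$, the four type conditions translate into $w_2=w_4<w_1,w_3,w_5$, the equality $b_2=b_4$ with $b_r+w_r>b_2+w_2$ for $r\in\{1,3,5\}$, $w'_3=w'_4<w'_1,w'_2,w'_5$, and the equality $a_3=a_4$ with $a_r+w'_r>a_3+w'_3$ for $r\in\{1,2,5\}$. Because $H_2$ is a $2$-coordinate hyperplane and $H_3$ a $3$-coordinate hyperplane we also know $2\notin\type_{H_2}w$, $3\notin\type_{H_3}w'$ and $w\in H_2$, $w'\in H_3$; and since $a_3=a_4$ with $w_3>w_4$ forces $3\notin\type_{H_2}w$ while $b_2=b_4$ with $w'_2>w'_4$ forces $2\notin\type_{H_3}w'$, the minimum of $(a_r+w_r)_r$ is attained at two or more indices of $\{1,4,5\}$, and likewise for $(b_r+w'_r)_r$.

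The key point is that $w$ and $w'$ alone do not suffice: one must use the hypothesis that \emph{every} pair of hyperplanes has a witness in $W$. So next I would bring in the witness $z\in W$ of the pair $(H_2,H_3)$: its common type $\type_{H_2}z=\type_{H_3}z$ is a two-element subset disjoint from $\{2,3\}$, hence one of $\{1,4\}$, $\{1,5\}$, $\{4,5\}$, and I would split into these three cases. In each case I would combine the type equalities and strict inequalities coming from $z$ with those coming from $w$ and $w'$, together with the coordinate-hyperplane conditions applied to $z$ (namely $1\notin\type_{H_1}z$, $4\notin\type_{H_4}z$, $5\notin\type_{H_5}z$), to reach a contradiction. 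If one auxiliary witness is not enough in some case, the same device is available for the pairs $(H_1,H_4)$ and $(H_1,H_5)$, whose witness types are already pinned down by Lemma~\ref{lem:1}.

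The hard part will be this last step: deciding which auxiliary witness to feed into each case and then spotting, inside each of the handful of resulting subcases, the chain of strict inequalities among the defining relations that telescopes to a false statement such as $c<c$. I expect this to run parallel to, and slightly extend, the inequality bookkeeping in the proof of Lemma~\ref{lem:1}, and to be correspondingly short once the cases are organized.
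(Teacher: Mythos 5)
Your setup is exactly the paper's: normalize $H_1$ to the all-zero hyperplane, extract from the two hypothesized witnesses the coefficient identities $b_2=b_4$ and $a_3=a_4$ together with the strict inequalities on $w$ and $w'$, observe that a witness $z$ for the remaining pair $(H_2,H_3)$ must have type $\{1,4\}$, $\{1,5\}$ or $\{4,5\}$ because those hyperplanes are $2$- and $3$-coordinate hyperplanes, and split into these three cases. All of that is correct and coincides, up to relabeling, with the skeleton of the paper's proof. The problem is that the proposal stops exactly where the content of the lemma lies: in none of the three cases do you actually derive a contradiction. You assert that the inequalities from $w$, $w'$ and $z$ should ``telescope to a false statement,'' explicitly defer this as ``the hard part,'' and even leave open whether additional witnesses (for $(H_1,H_4)$ and $(H_1,H_5)$, via Lemma~\ref{lem:1}) would be needed. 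The existence of a contradiction in each case is precisely what has to be verified, and nothing in the proposal verifies it; as written this is a plan rather than a proof.

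For comparison, the paper closes each case with a short inequality chain using only the two hypothesized witnesses and the single coefficient relation forced by the type of the third witness: in your labeling, a type-$\{1,4\}$ witness for $(H_2,H_3)$ forces $a_1-b_1=a_4-b_4$, a type-$\{1,5\}$ witness forces $a_1-b_1=a_5-b_5$, and a type-$\{4,5\}$ witness forces $a_4-b_4=a_5-b_5$; in each case, membership of $w$ in $H_2$ (with the minimum attained twice among indices $\{1,4,5\}$, as you correctly noted) yields one strict inequality between the relevant coefficients, while membership of $w'$ in $H_3$ yields the reverse inequality, giving statements of the shape $b>d$ and $d>b$. No auxiliary witnesses beyond $z$ and no appeal to Lemma~\ref{lem:1} are needed. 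So the strategy you chose does go through, but the case-by-case bookkeeping you postponed is the proof, and it is absent from the proposal.
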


\begin{proof}[Proof of Proposition~\ref{prop:3}]
$H_i$ and $H_k$ have some witness to their nonstable intersection; it must be of type $jl$, $jm$, or $lm$. But it is not of type $jm$ by Lemma~\ref{lem:1}, and it is not of type $jl$ by Lemma~\ref{lem:2}.
\end{proof}

Before proving the lemmas, we first prove that Proposition~\ref{prop:3} implies the main result.

\begin{proof}[Proof of Theorem~\ref{thm:main}]
Fix $n \geq 4$; let $A$ be a $5 \times n$ real matrix, and let $W=\{w_1, \dots, w_n\}$ be the set of its column vectors. Suppose that the tropical rank of $A$ is $\leq 3$. We wish to show that the Kapranov rank is $\leq 3$, or equivalently, that there exists a 3-dimensional subspace in $K^5$ whose tropicalization contains each point $w_1, \dots, w_n$.

Let $A'$ be the $4 \times n$ matrix obtained by deleting the first row
of $A$. Then the tropical rank of $A'$ is $\leq 3$, so by
Theorem~\ref{old theorem}, the Kapranov rank of $A'$ is $\leq 3$, so
the columns of $A'$ lie on some tropical hyperplane, say
\[ h_{12} \odot x_2 \oplus h_{13} \odot x_3 \oplus h_{14} \odot x_4 \oplus h_{15} \odot x_5. \]
Then, for $N$ sufficiently large,
\[ H_1 := N \odot x_1 \oplus h_{12} \odot x_2 \oplus h_{13} \odot x_3 \oplus h_{14} \odot x_4 \oplus h_{15} \odot x_5 \]
is a hyperplane containing the columns of $A$, indeed a 1-coordinate hyperplane, where none of $w_1, \dots, w_n$ has type containing $1$.

Similarly, we may choose $H_2, H_3, H_4, H_5$ to be 2, 3, 4, 5-coordinate hyperplanes, respectively, for the points $w_1, \dots, w_n$.

We claim that for some $i,j$ with $1 \leq i < j \leq 5$, $H_i$ and $H_j$ contain each $w_1, \dots, w_n$ in their stable intersection. If so, we are done by Proposition~\ref{prop:stablelift}.

Suppose, then, that the claim is not true, so that for every $i,j$ with $1\leq i < j \leq 5$, some point in $W$ witnesses the nonstable intersection of $H_i$ and $H_j$. We derive a contradiction as follows.

By symmetry, we may assume that $H_1$ and $H_2$ have a witness of type $34$. We now apply Proposition~\ref{prop:3} four times to get a contradiction. First, $H_1$ and $H_3$ have a witness in $W$ to their nonstable intersection by assumption; it is of type $45$ by Proposition~\ref{prop:3}. Similarly, $H_1$ and $H_4$ have a witness in $W$ of type $35$. Applying Proposition~\ref{prop:3} to these two facts, we get that any witness in $W$ for $H_1$ and $H_5$ must have type $24$, and similarly, that any witness in $W$ for $H_1$ and $H_5$ must have type $23$. Since $H_1$ and $H_5$ do have a witness in $W$, by assumption, this is a contradiction.
\end{proof}

\begin{proof}[Proof of Lemma~\ref{lem:1}]
By symmetry, assume $i=1, j=2, k=4, l=5$. Suppose $H_1$ and $H_2$ have a witness of type 45, and that $H_1$ and $H_4$ have a witness of type not containing 5 -- that is, we assume that $H_1$ and $H_4$ have a witness of type 23. We wish to derive a contradiction.

For each $s$ with $1 \leq s \leq 5$, write
\[ H_s = h_{s1} \odot x_1 \oplus \cdots \oplus h_{s5} \odot x_5, \text{with } h_{sr}  \in \R. \]
By translating each hyperplane and each point, we may assume that
\[ H_1 = 0 \odot x_1 \oplus \dots \oplus 0 \odot x_5, \]
and for each $s$ with $2 \leq s \leq 5$, we may assume, by tropically scaling the coefficients of $H_s$, that $h_{s1} = 0$. Furthermore, since $H_1$ and $H_2$ have a witness of type 45, and $h_{14} = h_{15}$, it follows that $h_{24} = h_{25}$. Similarly, $h_{42} = h_{43}$. Summarizing, we have
\begin{align*}
H_1 &= 0x_1 \oplus 0x_2 \oplus 0x_3 \oplus 0x_4 \oplus 0x_5,\\
H_2 &= 0x_1 \oplus ex_2 \oplus bx_3 \oplus ax_4 \oplus ax_5,\\
H_4 &= 0x_1 \oplus cx_2 \oplus cx_3 \oplus fx_4 \oplus dx_5,
\end{align*}
with $a,b,c,d,e,f \in \R$.
By symmetry (i.e. switching 2 with 4 and 3 with 5), we may assume $a \leq c$.

Now, we claim $b > a$. Indeed, let $\omega = (\omega_1, \omega_2, \omega_3, \omega_4, \omega_5)$ be a witness of type 23 for $H_1$ and $H_4$. Since $\type_{H_1}(\omega) = 23$, we have that the minimum of $\{\omega_1, \omega_2, \omega_3, \omega_4, \omega_5\}$ is attained twice, in fact, precisely at $\omega_2$ and $\omega_3$. Tropically rescaling, we may assume that $\omega_2 = \omega_3 = 0$ and that $\omega_1, \omega_4, \omega_5 > 0$. Since $\type_{H_4}(\omega) = 23$, we have that $\min(\omega_1, c+\omega_2,c+\omega_3,f+\omega_4,d+\omega_5)$ is attained precisely at $c+\omega_2 = c+\omega_3 = c$, so $\omega_1 > c$. Finally, since $\omega \in H_2$, and $H_2$ is a 2-coordinate hyperplane, we have that $\min(\omega_1,b+\omega_3,a+\omega_4,a+\omega_5)$ is achieved twice. Since $\omega_1 > c \geq a$, $\omega_3 = 0$, $\omega_4, \omega_5 > 0$, this is only possible if $b > a$.

Next, we claim $d > a$. The proof is similar. Let $\chi = (\chi_1, \dots, \chi_5) \in \TP^4$ be a witness of type 45 for $H_1$ and $H_2$. Using that $\type_{H_1} \chi = \type_{H_2} \chi = \{4,5\}$ and tropically rescaling, we have $\chi_1 > a$, $\chi_2 > 0$, $\chi_3 > 0$, $\chi_4 = \chi_5 = 0$. Together with $a\leq c$ this implies $c+\chi_2,c+\chi_3>a$. But $\chi \in H_4$ and $H_4$ is a 4-coordinate hyperplane, so $\min(\chi_1,c+\chi_2,c+\chi_3,d+\chi_5)$ is attained twice, and since $\chi_1, c+\chi_2,c+\chi_3$ are all $>a$ and $\chi_5 = 0$, we have $d > a$.

Now, $H_2$ and $H_4$ have some witness of nonstable intersection, say $\psi = (\psi_1,\psi_2,\psi_3,\psi_4,\psi_5) \in \TP^4$, where $\psi$ is a witness of type 13, 15, or 35. Since $h_{21} = h_{41} = 0$, but $h_{25} = a \neq d = h_{45}$, it is not type 15, so it is of type 13 or 35.

Suppose $\psi$ is of type 35, so $\type_{H_2} \psi = \type_{H_4} \psi = \{3, 5\}$. Then, rescaling, we may assume $\psi_3 = a$, $\psi_5 = b$, $\psi_2 > a$, and $\psi_4 > b$. But we showed $b > a$, so $\min(\psi_2, \psi_3, \psi_4, \psi_5)$ is attained uniquely, contradicting that $\psi \in H_1$ and $H_1$ is a 1-coordinate hyperplane.

So $\psi$ must be a witness of type 13 for $H_2$ and $H_4$. Since $h_{21} = h_{41} = 0$, we have $h_{23} = h_{43}$, that is, $b = c$. Furthermore, using $\type_{H_2} \psi = \type_{H_4} \psi = \{1,3\}$, and rescaling $\psi$, we may assume that $\psi_1 = b$, $\psi_2 > 0$, $\psi_3 = 0$, $\psi_4 > b-a$, $\psi_5 > b-a$. But since $b-a > 0$ and $H_1$ is a 1-coordinate hyperplane, $\psi \notin H_1$, contradiction. This proves Lemma~\ref{lem:1}.
\end{proof}

\begin{proof}[Proof of Lemma~\ref{lem:2}]
By symmetry, we may assume $i=1, j=4, k=3, l=2$, and we suppose for a contradiction that $H_1$ and $H_3$ have a witness, $\omega$, of type 24, and $H_1$ and $H_4$ have a witness, $\chi$, of type 23. We may assume, by translating and rescaling, that
\begin{align*}
H_1 &= 0x_1 \oplus 0x_2 \oplus 0x_3 \oplus 0x_4 \oplus 0x_5, \\
H_3 &= 0x_1 \oplus ax_2 \oplus ex_3 \oplus ax_4 \oplus bx_5, \\
H_4 &= 0x_1 \oplus cx_2 \oplus cx_3 \oplus fx_4 \oplus dx_5,
\end{align*}
for some $a,b,c,d,e,f \in \R$. Then, rescaling, we may assume that $\omega = (\omega_1, 0, \omega_3, 0, \omega_5)$ where $\omega_1 > a$, $\omega_3 > 0$, and $\omega_5 > a-b$. Similarly, we may assume that $\chi = (\chi_1, 0, 0, \chi_4, \chi_5)$, where $\chi_1 > c$, $\chi_4 > 0$, and $\chi_5 > c - d$.

Now, by hypothesis, $H_3$ and $H_4$ have some witness $\psi$ to their nonstable intersection; its type must be 12, 15, or 25, since types containing 3 or 4 may not occur.

Suppose it is type 12. Then $a = c$. That $\omega$ lies on $H_4$ implies that $\min(\omega_1, c, c+\omega_3, d+\omega_5)$ is attained twice; since $\omega_1 > a = c$ and $c+\omega_3 > c$, we have $c = d+\omega_5$. Since $\omega_5 > a-b$, we have $c > d+a-b$, so $b > d$. 
Symmetrically, $\chi \in H_3$ implies $\min(\chi_1,a+\chi_2,a+\chi_4,b+\chi_5)$ is achieved twice; since $\chi_1>c=a$, $a+\chi_2=a$, $a+\chi_4>a$ and $b+\chi_5>b+c-d=a+b-d$, it follows that $d > b$, contradiction.

Next, suppose $\psi$ is a witness for $H_3$ and $H_4$ of type 15. Then $b = d$. Then $\omega \in H_4$ implies that $\min(\omega_1,c,c+\omega_3,d+\omega_5)$ is achieved twice; since $\omega_1 > a$, $\omega_3 > 0$ and $d+\omega_5 > d+a-b=a$, it follows that $c > a$ (otherwise the minimum is achieved uniquely at $c$).
Symmetrically, $\chi \in H_3$ implies $\min(\chi_1,a+\chi_2,a+\chi_4,b+\chi_5)$ is achieved twice; since $\chi_1>c$, $a+\chi_2=a$, $a+\chi_4>a$ and $b+\chi_5>b+c-d=c$, it follows that $a > c$, contradiction.

Finally, suppose $\psi$ is a witness of type 25. Then $a+d = b+c$, and $\omega \in H_4$ implies that $\min(\omega_1,c,c+\omega_3,d+\omega_5)$ is attained twice; since $\omega_1 > a$, $c+\omega_3 > c$, and $d+\omega_5 > d+a-b=c$, we have $c > a$.
Symmetrically, $\chi \in H_3$ implies $\min(\chi_1,a+\chi_2,a+\chi_4,b+\chi_5)$ is achieved twice; since $\chi_1>c$, $a+\chi_2=a$, $a+\chi_4>a$ and $b+\chi_5>b+c-d=a$, it follows that $a > c$. This is a contradiction and proves Lemma~\ref{lem:2}.
\end{proof}

\appendix
\bibliographystyle{hplain}
\bibliography{jensen.bib}
\end{document}